\numberwithin{equation}{section}
\numberwithin{figure}{section}
\theoremstyle{plain}
\newtheorem{thm}{\protect\theoremname}[section]
  \theoremstyle{plain}
  \newtheorem{prop}[thm]{\protect\propositionname}
  \theoremstyle{plain}
  \newtheorem{lem}[thm]{\protect\lemmaname}
  \theoremstyle{plain}
  \theoremstyle{remark}
  \newtheorem{rem}[thm]{\protect\remarkname}
  \theoremstyle{definition}
  \theoremstyle{plain}
  \theoremstyle{definition}
  \newtheorem{defn}[thm]{\protect Definition}
  \providecommand{\corollaryname}{Corollary}
  \providecommand{\examplename}{Example}
  \providecommand{\lemmaname}{Lemma}
  \providecommand{\propositionname}{Proposition}
  \providecommand{\questionname}{Question}
  \providecommand{\remarkname}{Remark}
  \providecommand{\theoremname}{Theorem}
\DeclareMathOperator{\Fl}{Fl}
\DeclareMathOperator{\Hom}{Hom}
\DeclareMathOperator{\Spec}{Spec}
\DeclareMathOperator{\Aut}{Aut}
\DeclareMathOperator{\GL}{GL}
\DeclareMathOperator{\PGL}{PGL}
\DeclareMathOperator{\Ker}{Ker}
\DeclareMathOperator{\Gal}{Gal}
\DeclareMathOperator{\Bir}{Bir}
\DeclareMathOperator{\m}{min}
\DeclareMathOperator{\ind}{ind}
\DeclareMathOperator{\Lin}{Lin}
\begin{document}

\title[Boundedness of automorphism groups of forms of flag varieties]{Boundedness properties of automorphism groups of forms of flag varieties}

\author{Attila Guld}
\email{guld.attila@renyi.mta.hu}

\thanks{The research was partly supported by the National Research, Development and Innovation Office (NKFIH) Grant No. K120697. 
The project leading to this application has received funding from the European Research Council (ERC) under the European Union's Horizon 2020 research and innovation programme (grant agreement No 741420).}

\address{
R\'enyi Alfr\'ed Matematikai Kutat\'oint\'ezet\\
Re\'altanoda utca 13-15.\\
Budapest, H1053\\
Hungary}

\begin{abstract}
We call a flag variety admissible if its automorphism group is the projective general linear group. (This holds in most cases.)\\
Let $K$ be a field of characteristic $0$, containing all roots of unity.
Let the $K$-variety $X$ be a form of an admissible flag variety. We prove that $X$ is either ruled, or the automorphism group of $X$ is bounded, 
meaning that, there exists a constant $C\in\mathbb{N}$ such that if $G$ is a finite subgroup of $\Aut_K(X)$, then the cardinality of $G$ is smaller than $C$.
\end{abstract}

\keywords{automorphism group, Jordan group, flag variety, form of flag variety, Galois descent}
\maketitle

\section{Introduction}
Before stating our main theorem we need to introduce a couple of definitions and notations.
\begin{defn}[Definition 2.9 in \cite{Po11}]
A group $G$ is called bounded if there exists a constant $C\in\mathbb{N}$ such that every finite subgroup of $G$ has smaller cardinality than $C$. 
\end{defn}

Let $V$ be a finite dimensional vector space (over an arbitrary field). A flag is a strictly increasing sequence of linear subspaces of $V$  (with respect to the the containment order).
By $\Fl (d_1<d_2<...<d_r, V)$ or simply by $\Fl (\mathbf{d},V)$  we denote the flag variety of the sequence of  linear subspaces  of $ V$ (flags) of dimensions
determined by the strictly increasing sequence of nonnegative integers $\mathbf{d}=(d_1,d_2,...,d_r)$, where $d_r\leqq \dim V$. 
We also use the notation $\Fl(\mathbf{d}<\mathbf{e}, V)$  governed by similar logic, using the strictly increasing sequence of nonnegative integers
$\mathbf{d}<\mathbf{e}=(d_1,...,d_p,e_1,...,e_q)$ ($e_q\leqq\dim V$).  If $d_1\geqq n$  then the notation $\mathbf{d}-n$ stands for the strictly increasing sequence of nonnegative integers $\mathbf{d}-n=(d_1-n,...,d_r-n)$.\\
If no confusion can arise we omit the specification of the vector space or the strictly increasing sequence of nonnegative integers or both of them. 
When we say $\Fl(\mathbf{d},V)$ is a flag variety, we implicitly assume that $V$ is a vector space over some field and $\mathbf{d}=(d_1,...,d_r)$ is a strictly increasing sequence of nonnegative integers, where $d_r\leqq\dim V$.\\

\begin{defn}
\label{admissible flag}
We call a flag variety admissible, if its automorphism group is  the projective general linear group, otherwise we call it non-admissible.
\end{defn}

Later on we will see that a flag variety is admissible unless it is  isomorphic to a flag variety $\Fl(d_1<...<d_r, V)$, where $0<d_1$, $d_r<\dim V$, $\dim V\geqq 3$ and $\forall i=1,...,r$ $\:$ $d_i+d_{r+1-i}=\dim V$.
Notice that the conditions $0<d_1$ and $d_r<\dim V$ are technical assumptions, they do not exclude any isomorphism class of flag varieties.
The automorphism group of the non-admissible flag variety $\Fl (\mathbf{d},V)$  is $\PGL(V)\rtimes\mathbb{Z}/2\mathbb{Z}$. (See Theorem \ref{Aut(F)} for further details.)

\begin{defn}
Let $K$ be a field. The $K$-variety $X$ is a form of a flag variety if $X\times\Spec \overline{K}\cong \Fl(\mathbf{d},V_{\overline{K}})$ where $\overline{K}$ is the algebraic closure of $K$, $V_{\overline{K}}$ 
is a finite dimensional $\overline{K}$-vector space, and $\mathbf{d}$ is a strictly increasing sequence of nonnegative integers.
\end{defn}

Now we are ready to state the main theorem of the article.

\begin{thm}
\label{MainThm}
Let $K$ be a field of characteristic $0$, containing all roots of unity. Let the  $K$-variety $X$ be a form of an admissible flag variety. 
Then either the automorphism group $\Aut_K(X)$ is bounded, or $X$ is birational to a direct product variety $Y\times \mathbb{P}^1$, in other words $X$ is ruled.
\end{thm}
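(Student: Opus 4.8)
The plan is to translate the problem into the language of central simple algebras via Galois descent and then run a case analysis on the Wedderburn decomposition.

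Since $X$ is a form of an admissible flag variety $\Fl(\mathbf{d},V_{\overline{K}})$ with $n:=\dim V_{\overline{K}}$, and admissibility means precisely that $\Aut(\Fl(\mathbf{d},V_{\overline{K}}))=\PGL(V_{\overline{K}})=\PGL_n$ (a connected group), the variety $X$ is classified by a class in $H^1(K,\PGL_n)$, i.e.\ by a degree-$n$ central simple $K$-algebra $A$; concretely $X$ is the twisted flag variety $\mathrm{SB}(\mathbf{d},A)$ of flags of right ideals of $A$ of reduced dimensions $\mathbf{d}$. The automorphism group scheme of $X$ is then the inner form $\PGL_1(A)$ of $\PGL_n$ twisted by the same cocycle, so by Hilbert's Theorem~90 we get $\Aut_K(X)=\PGL_1(A)(K)=A^\times/K^\times$. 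By Wedderburn, $A\cong M_t(D)$ for a central division $K$-algebra $D$ of degree $m=\ind(A)$, with $n=tm$; the argument now forks on whether $t=1$ or $t\ge 2$.

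Suppose first $t\ge 2$, so that $A$ is not a division algebra. Then $\Aut_K(X)=\GL_t(D)/K^\times$ is \emph{not} bounded: embedding $\GL_2(K)$ into $\GL_t(D)$ as a principal block (identity in the remaining entries) and composing with $\GL_t(D)\to\GL_t(D)/K^\times$ yields a copy of $\GL_2(K)$ (of $\PGL_2(K)$ when $t=2$), which contains $\langle \mathrm{diag}(\zeta_N,1)\rangle$ for every $N$ because $\mu_N\subseteq K$. So for the theorem one must show $X=\mathrm{SB}(\mathbf{d},M_t(D))$ is ruled. When every $d_i$ is divisible by $m$ this is easy: $X$ has a $K$-rational flag of ``column'' right ideals, and a twisted flag variety $\mathrm{SB}(\mathbf{e},A')$ carrying a $K$-point is rational — the point furnishes a $K$-defined parabolic in $\PGL_1(A')$ together with a $K$-defined opposite (submodules over the semisimple algebra $A'$ have complements), whose big cell is an affine-space dense open — so $X$, being positive-dimensional, is ruled. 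In general I would seek a birational fibration of $X$ over a lower-dimensional twisted flag variety of $D$ (or of a matrix algebra over $D$) with rational, positive-dimensional generic fibre: forgetting an appropriate subset of the flag steps and invoking index reduction along the resulting function fields is the natural approach, and it should deliver $X\sim Z\times\mathbb{P}^{N}$ with $N\ge 1$, hence $X$ ruled. Making this work uniformly in $\mathbf{d}$, when $A$ is not division, is the crux and, I expect, the main obstacle; the rest of the proof is essentially formal.

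Now suppose $t=1$, so $A=D$ is a central division algebra and $\Aut_K(X)=D^\times/K^\times$; I claim this group is bounded by a constant depending only on $m=\deg D$. The hypothesis $\mu_\infty\subseteq K$ is used to control cyclic subgroups: if $\bar g\in D^\times/K^\times$ has finite order $e$, lift it to $g\in D^\times$ with $g^e=c\in K^\times$; since $\mu_e\subseteq K$ one has $g=\zeta\,c^{1/e}$ for some $\zeta\in\mu_e$, so $K(g)=K(c^{1/e})$ is a Kummer extension of $K$ of degree exactly $e$, and therefore $e=[K(g):K]\mid m$. The same Kummer mechanism bounds $|\bar B|$ in terms of $m$ for any finite abelian subgroup $\bar B\le D^\times/K^\times$: via the commutator pairing $\bar B\times\bar B\to\mu_\infty$ (whose values are roots of unity of bounded order), whose radical embeds into $Z(K[\bar B])^\times/K^\times$ for the subfield $Z(K[\bar B])\subseteq D$ and whose nondegenerate part generates a symbol subalgebra of $D$, one controls the rank and exponent of $\bar B$. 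Finally, Jordan's theorem, applied inside $\PGL_n(\overline{K})\supseteq\PGL_1(D)(K)$, furnishes in every finite subgroup of $\Aut_K(X)$ an abelian subgroup of index at most the Jordan constant; combined with the bound on $|\bar B|$ this shows $\Aut_K(X)$ is bounded, which completes the proof.
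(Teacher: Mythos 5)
Your translation into central simple algebras is legitimate and genuinely different from the paper's route: since the flag variety is admissible, $X$ is indeed classified by a class in $H^1(K,\PGL_n)$, hence by a degree-$n$ algebra $A$, with $X\cong \mathrm{SB}(\mathbf{d},A)$ and $\Aut_K(X)\cong A^\times/K^\times$ by Hilbert 90. Your dichotomy ($A$ division versus not) matches the paper's dichotomy (bounded versus unbounded automorphism group). The $t=1$ half is essentially sound: the minimal-polynomial/Kummer argument does give exponent dividing $m$ for elements of $D^\times/K^\times$ (your phrase ``$K(g)=K(c^{1/e})$'' is loose but the conclusion $e=[K(g):K]\mid m$ is correct), and combined with a rank bound for finite abelian subgroups of $\PGL_m(\overline{K})$ and Jordan's theorem this yields boundedness; this half is under-detailed but completable by standard arguments.

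The genuine gap is exactly where you locate it: when $t\geqq 2$ you must prove that $\mathrm{SB}(\mathbf{d},M_t(D))$ is ruled for \emph{arbitrary} $\mathbf{d}$, and you only handle the sub-case where every $d_i$ is divisible by $m=\deg D$ (where a rational point exists and the big cell gives rationality). For general $\mathbf{d}$ you offer only the intention to ``seek a birational fibration \dots\ and invoke index reduction,'' which is not an argument; index reduction controls the index of $D$ over function fields of twisted flag varieties but does not by itself produce a ruling, and the generic fibre of ``forgetting flag steps'' is a twisted flag variety of a possibly still nontrivial algebra, so rationality of the fibre is not automatic. This missing step is precisely what the paper's proof supplies, by a different mechanism: from an automorphism $g$ of order $>n!$ one extracts, over a splitting field $L$, a direct sum decomposition $V=V_1\oplus V_2$ that is respected by the twisted Galois action (Lemma \ref{splitting}); the linear projection of flags then exhibits a Galois-equivariant dense open $U\subseteq\Fl(\mathbf{d_0},V)$ as a vector bundle over a product of smaller flag varieties (Proposition \ref{vb} and Lemma \ref{ConstrG-vb}), and Galois descent (Theorem \ref{G-vb}) transports this to $X$, giving ruledness. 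If you want to stay inside the algebra language, the analogue of that decomposition is a nontrivial idempotent of $A$ suitably compatible with the descent datum, and you would still have to carry out the equivariant vector-bundle analysis; as written, your proposal does not close this case.
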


Before moving further we introduce another definition from group theory.

\begin{defn}[Definition 2.1 in \cite{Po11}]
\label{Jor}
A group $G$ is called Jordan if there exists a constant $J\in\mathbb{N}$ such that for every finite subgroup $H\leqq G$ there exists an Abelian normal subgroup $A\leqq H$  such that $\vert{H\colon A}\vert<J$.
\end{defn}

In \cite{BZ15b} T. Bandman and Yu. G. Zarhin answered a question of Yu. Prokhorov and C. Shramov (\cite{PS2}) by showing that the birational automorphism group of a conic bundle over a non-uniruled base is Jordan 
when it is not birational to the trivial $\mathbb{P}^1$-bundle over the non-uniruled base. 
One of the major steps in their proof was to show that the birational (and hence the biregular) automorphism group of a non-trivial Brauer-Severi curve is bounded. This follows from our theorem as a special case. 
(They also showed that the cardinalities of the finite subgroups of the automorphism group are bounded by four.)\\
The result on the boundedness of the automorphism groups of non-trivial Brauer-Severi curves was also used by Yu. Prokhorov and C. Shramov 
when they classified three dimensional varieties with non-Jordan birational automorphism groups (\cite{PS16b}).\\

Another aspect of our motivation is that, we would like to investigate conditions which imply that the birational automorphism group of a rationally connected variety is bounded. 
We hope that by regularizing actions of finite subgroups of the birational automorphism groups  (\cite{PS2}, Lemma 3.1), and by the help of the Minimal Model Program, 
this question can be reduced to studying finite subgroups of the automorphism groups of Fano varieties over function fields. 
As a special case we investigated boundedness properties of automorphism groups of forms of flag varieties.\\

The definitions of the Jordan and the boundedness properties was introduced by V. L. Popov. They are closely related. Boundedness implies the Jordan property, 
while a typical strategy for proving that a group is Jordan is to show that the group sits in an exact sequence where the normal subgroup is Jordan and the quotient group is bounded (\cite{Po11}, Lemma 2.11).
A survey of results concerning these properties of groups and the relations between them can be found in \cite{Po} and in Section 2 of \cite{PS2}.\\
Research about investigating Jordan properties for birational and biregular automorphism groups of varieties was initiated by J.-P. Serre in \cite{Se09} and V. L. Popov in \cite{Po11}. 
 Recently many authors have contributed to the subject 
(\cite{BZ15a}, \cite{BZ15b}, \cite{Hu18}, \cite{MZ15}, \cite{Po11}, \cite{Po}, \cite{PS2}, \cite{PS16}, \cite{PS16b}, \cite{Se09}, \cite{Za15}).\\

The idea of our proof is the following. A form of a flag variety can be viewed as a flag variety equipped with a twisted Galois action.  
The automorphism group of the form embeds into the automorphism group of the flag variety, and its action commutes with the twisted Galois action. 
If the automorphism group of the form is not bounded, then the commutation imposes condition on the twisted Galois action. Using this, we may construct a Galois equivariant rational map from the flag variety to a smaller dimensional variety. 
It turns out that this rational map induces a vector bundle structure on the open subset of the flag variety where the map is defined and the twisted Galois action respects the vector bundle structure. 
By results of Galois descent, we descend the vector bundle structure to an open subvariety of the form. This proves our theorem.\\

We use the admissibility hypothesis to construct the \textit{Galois equivarant} rational map from our flag variety to a smaller dimensional variety. Although the rational map can be constructed anyway, 
we use the admissibility condition when we endow the target space with a Galois action which makes the rational map equivaraint.
For a more detailed discussion see Remark \ref{NaturalAction2} and Remark \ref{NANGE}.\\

In general, it is a very hard question to decide whether a variety is ruled or not. Amongst forms of (admissible) flag varieties we can find examples to both cases.\\
Indeed, flag varieties are rational, therefore they are ruled. On the other hand non-trivial Brauer-Severi curves and surfaces provide examples of non-ruled forms of admissible flag varieties.
Non-trivial Brauer-Severi curves are non-ruled  essentially as a consequence of their definition, while the case of non-trivial Brauer-Severi surfaces will be explored in Section \ref{BS2}. 
Here we only state the corresponding theorem.
\begin{thm}
\label{BS2T}
Let $K$ be a field of arbitrary characteristic. Let $X$ be a Brauer-Severi surface over $K$. $X$ is ruled if and only if it is trivial.\\
\end{thm}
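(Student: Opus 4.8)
The plan is to handle the two implications separately. The ``if'' direction is immediate: a trivial Brauer-Severi surface is isomorphic to $\mathbb{P}^2_K$, which is ruled, being birational to $\mathbb{P}^1_K\times\mathbb{P}^1_K$. For the converse I would argue contrapositively and prove that a \emph{non-trivial} Brauer-Severi surface $X$ is not ruled. Two preliminary facts about such an $X$ are needed. First, $X$ is the Brauer-Severi variety of a central simple $K$-algebra $A$ of degree $3$; since $3$ is prime and $A$ is non-split, $A$ is a division algebra with $\ind(A)=3$, and in particular $K$ is infinite (Wedderburn's little theorem forbids non-split finite-dimensional division algebras over a finite field). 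Second, for every closed point $x\in X$ the base change $X_{\kappa(x)}$ has the $\kappa(x)$-rational point $x$, hence is isomorphic to $\mathbb{P}^2_{\kappa(x)}$ by Ch\^atelet's theorem; thus $\kappa(x)$ splits $A$, so $3=\ind(A)$ divides $[\kappa(x):K]$. In short, every closed point of $X$ has degree divisible by $3$.

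Now I would assume $X$ is ruled, i.e., birational to $Y\times\mathbb{P}^1_K$, and work towards a contradiction. Comparing dimensions shows $Y$ is a curve; since $X$ is geometrically integral, so is $Y\times\mathbb{P}^1$, hence so is $Y$, and after replacing $Y$ by its smooth projective model (which does not affect the birational equivalence, as it only depends on $K(Y)$) I may assume $Y$ is a smooth projective geometrically integral curve over $K$. The next point is that $Y$ has genus $0$: by flat base change $h^1(X,\mathcal{O}_X)=h^1(X_{\overline{K}},\mathcal{O})=h^1(\mathbb{P}^2_{\overline{K}},\mathcal{O})=0$, while by the K\"unneth formula $h^1(Y\times\mathbb{P}^1,\mathcal{O})=h^1(Y,\mathcal{O}_Y)=g(Y)$, and $h^1(\mathcal{O})$ is a birational invariant of smooth projective varieties in arbitrary characteristic (it is unchanged under blow-ups of smooth surfaces), so $g(Y)=0$. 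A genus-$0$ curve satisfies $\deg K_Y=-2$, so Riemann--Roch gives $h^0(Y,\omega_Y^{-1})=3$; the anticanonical linear system is defined over $K$ and nonempty, hence $Y$ carries an effective $K$-rational divisor of degree $2$, and therefore a closed point of degree $1$ or $2$.

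To finish I would manufacture a closed point of $X$ of degree $\le 2$, contradicting the divisibility statement. Fix a closed point $y\in Y$ with $d:=[\kappa(y):K]\in\{1,2\}$, and an isomorphism $U\cong V$ between dense open subsets $U\subseteq X$ and $V\subseteq Y\times\mathbb{P}^1$. The fibre $\{y\}\times\mathbb{P}^1_{\kappa(y)}$, which is isomorphic to $\mathbb{P}^1_{\kappa(y)}$, meets the proper closed subset $(Y\times\mathbb{P}^1)\setminus V$ in only finitely many points, whereas it has infinitely many $\kappa(y)$-rational points because $\kappa(y)$ is infinite; so it contains a $\kappa(y)$-rational point lying in $V$, whose image in $U\subseteq X$ is a closed point of $X$ of degree $d\le 2$. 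This contradiction shows $X$ is not ruled. I expect the most delicate step to be the reduction to a smooth projective genus-$0$ base --- ensuring that the birational equivalence survives passage to the smooth projective model, that geometric integrality is preserved, and that $h^1(\mathcal{O})$ is genuinely a birational invariant in arbitrary characteristic; the remaining ingredients are standard facts about Brauer-Severi varieties and central simple algebras.
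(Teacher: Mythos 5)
Your proof is correct in substance but takes a genuinely different route from the paper's. Both arguments begin the same way: reduce to showing that a non-trivial $X$ birational to $Y\times\mathbb{P}^1$ leads to a contradiction, and first establish that $Y$ is a genus-$0$ curve (the paper does this by taking a general rational curve in $Y_{\overline{K}}\times\mathbb{P}^1_{\overline{K}}$, which is rational because $X_{\overline{K}}\cong\mathbb{P}^2$, and projecting it dominantly onto $Y_{\overline{K}}$; you instead use flat base change, K\"unneth and the birational invariance of $h^1(\mathcal{O})$ --- the paper's geometric argument is arguably lighter). The endgames then diverge. The paper invokes Amitsur's theorem: the rational map $X\dashrightarrow Y$ forces $Y$ to be Brauer-equivalent to $X^{\otimes m}$, so $2=\ind(Y)$ must divide $\ind(X)=3$. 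You avoid the Brauer-group formalism on Brauer--Severi varieties entirely: Riemann--Roch on the genus-$0$ base gives a closed point of degree $\le 2$, you lift it through the ruling to a closed point of $X$ of degree $\le 2$ (using Wedderburn's little theorem to guarantee $K$ is infinite), and this contradicts the fact that $3=\ind(X)$ divides the degree of every closed point of $X$. Your version is more elementary and self-contained; the paper's buys brevity by outsourcing the work to Amitsur's theorem and Koll\'ar's index computations. The residual characteristic-$p$ subtlety you flag (regular versus smooth projective model of $Y$ over an imperfect field) is equally present in the paper's own proof and can be handled by working with the regular model and its dualizing sheaf.

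One step does need a patch. For a \emph{fixed} closed point $y\in Y$ of degree $\le 2$, the fibre $\{y\}\times\mathbb{P}^1$ need not meet $V$ at all: it could be an entire irreducible component of the closed set $(Y\times\mathbb{P}^1)\setminus V$, so the assertion that it meets that set in only finitely many points is not automatic. The fix is standard: $(Y\times\mathbb{P}^1)\setminus V$ contains only finitely many full fibres, and a genus-$0$ curve over an infinite field has infinitely many closed points of degree $\le 2$ (if it has a rational point it is $\mathbb{P}^1_K$; otherwise base-change to the quadratic field $\kappa(y_0)$ of one degree-$2$ point and push the infinitely many $\kappa(y_0)$-points back down to $Y$), so you may choose $y$ so that its fibre meets $V$. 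With that adjustment your argument is complete.
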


The paper is organized in the following way. In Section \ref{Pre} we recall the necessary knowledge about automorphism groups of flag varieties and Galois descent. 
In Section \ref{rational maps} we construct the rational maps which will give us the vector bundle structure. 
It is followed by Section\ref{ComAct}, where we analyze the effect of the commuting group actions when the automorphism group of the form of the flag variety is not bounded. 
Finally, Section\ref{Proof} contains the proof of our theorem. We enclose our article with a discussion on Brauer-Severi surfaces in Section \ref{BS2}.\\

\subsection{Conventions}
Throughout the article we use the following conventions.\\
Unless explicitly stated otherwise all fields are assumed to be of characteristic 0. 
For a field $K$ we use $\overline{K}$ to denote its (fixed) algebraic closure.\\
By a vector space we mean a finite dimensional vector space.
Sometimes in the notation of a vector space we make explicit the field over which the vector space is defined. When we say $V_K$ is a vector space, we mean that $V_K$ is a vector space defined over the field $K$.\\
Let $V$ be a vector space over a field $K$. By $\Lin_K(V)$ we denote the $K$-linear automorphism group of $V$.  (During the article we will encounter situations, where $V$ is a vector space over a field $L$,
 where $K\leqq L$, however we need to consider its $K$-linear automorphism group.)\\
By a variety we mean a separated, integral scheme of finite type over a field.\\ 
Let $X$ be an arbitrary scheme over a field $K$. By $\Aut_K(X)$ we denote the $K$-scheme automorphism group of $X$. (During the article we will encounter situations, where $X$ is a variety over a field $L$, where $K\leqq L$,
and we need to consider its $K$-scheme automorphism group.)\\ 
Let $X$ be a variety, by $\Bir(X)$ we denote the birational automorphism group of $X$.

\subsection{Acknowledgements}
The author is very grateful to E. Szab\'o for all the helpful discussions. The author thanks the referees for their valuable comments. 

\section{Preliminaries}
\label{Pre}

\subsection{Automorphism group of flag varieties}
In this subsection we collect results about automorphism groups of flag varieties.
First, we recall the definition of the automorphism group scheme.  

\begin{defn}
\label{AutSch}
Let $X$ be a scheme over a base scheme $S$. Consider the assignment 
$T\mapsto\Aut_T(X\times T)$ between $S$-schemes and abstract groups. It gives rise to a contravariant functor $A_X:(Sch/S)^{op}\to\underline{Gr}$ from the category of $S$-schemes to the category of groups. 
($(Sch/S)^{op}$ denotes the opposite category of the category of $S$-schemes). If $A_X$ can be represented by an $S$-scheme $Y$, then we call $Y$ the automorphism group scheme of $X$, and denote it by $Aut_S(X)$ or simply by $Aut(X)$. 
(In case of $S=\Spec K$, for some field $K$, we also use the notation $Aut_K(X)$.)
\end{defn}

\begin{rem}
\label{PofAut}
Note that the definition implies that $Aut_T(X\times T)\cong Aut_S(X)\times T$ for any $S$-scheme $T$ (by the adjoint property of restriction and extension of scalars).\\
It is also worth pointing out that an immediate consequence of the definition is the following. For a $K$-scheme $X$, if $Aut(X)$ exits, then the group of its $K$-rational points is isomorphic to the automorphism group of $X$,
 in formula $(Aut_K(X))(K)\cong\Aut_K(X)$.
\end{rem}

The following theorem of H. Matsumura and F. Oort secures the existence of the automorphism group schemes for flag varieties (Theorem 3.7 in \cite{MO}).

\begin{thm}
Let $K$ be a field of arbitrary characteristic, and let $X$ be a proper $K$-scheme. The automorphism group scheme $Aut(X)$ exists and it is of locally finite type over $K$.
\end{thm}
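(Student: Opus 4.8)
The plan is to represent the automorphism functor $A_X$ of Definition \ref{AutSch} as an open subfunctor of a Hilbert functor, from which the existence of the representing scheme, its group scheme structure, and the finiteness statement can all be read off. I will carry this out in detail when $X$ is projective over $K$ — which already covers everything needed in this paper, since flag varieties and their products are projective — and then indicate the extra input required for an arbitrary proper $K$-scheme, which is where the genuine difficulty lies.

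Assume first that $X$ is projective over $K$. By Grothendieck's theorem the Hilbert functor of $X\times_K X$ is represented by a $K$-scheme $H$, which, after fixing a projective embedding, is the disjoint union $\coprod_P H^P$ over Hilbert polynomials $P$ of projective, hence finite-type, $K$-schemes. The key observation is that an automorphism is the same thing as its graph: for a $K$-scheme $T$ and $\phi\in\Aut_T(X\times_K T)$, the graph $\Gamma_\phi\subseteq X\times_K X\times_K T$ is a closed subscheme flat over $T$ — the first projection restricts to an isomorphism $\Gamma_\phi\xrightarrow{\sim}X\times_K T$, which is flat over $T$ because $X$ is flat over $K$ — so it defines a $T$-point of $H$; conversely a $T$-flat closed subscheme $Z\subseteq X\times_K X\times_K T$ both of whose projections to $X\times_K T$ are isomorphisms is the graph of the automorphism obtained by composing the inverse of the first projection with the second. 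Hence $A_X$ is the subfunctor of $H$ consisting of those families for which both projections to $X\times_K(-)$ are isomorphisms.

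Next I would check that this subfunctor is open, so that it is represented by an open subscheme $\Aut_K(X)\subseteq H$. Let $\mathcal Z\subseteq X\times_K X\times_K H$ be the universal subscheme, with projections $q_1,q_2\colon\mathcal Z\to X\times_K H$; each $q_i$ is proper because $\mathcal Z$ is proper over $H$. The locus in $\mathcal Z$ where $q_i$ is quasi-finite is open, and its complement — a closed set — has closed image in $H$ under the proper map $\mathcal Z\to H$; over the complementary open subset $q_i$ is proper and quasi-finite, hence finite, and since both $\mathcal Z$ and $X\times_K H$ are flat over $H$ the fibrewise flatness criterion reduces flatness and then isomorphy of $q_i$ to the fibres, showing that the locus where $q_i$ is finite, flat, and an isomorphism on fibres (equivalently, where the coherent, $H$-flat sheaf $q_{i*}\mathcal O_{\mathcal Z}$ is invertible and the canonical map $\mathcal O_{X\times_K H}\to q_{i*}\mathcal O_{\mathcal Z}$ is an isomorphism) is open in $H$. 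Intersecting the two open loci obtained from $q_1$ and $q_2$ yields the open subscheme of $H$ representing $A_X$; since $A_X$ is a group-valued functor, Yoneda's lemma promotes this representing scheme to a group scheme $\Aut_K(X)$ over $K$, and as an open subscheme of the disjoint union $\coprod_P H^P$ of finite-type $K$-schemes it is locally of finite type over $K$, as claimed.

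Finally, when $X$ is merely proper the argument above stalls, because $X\times_K X$ need no longer be projective and Grothendieck's construction of $H$ is not directly available. The main obstacle is precisely to make sense of and represent the relevant Hilbert (equivalently, Hom) functors for proper, non-quasi-projective schemes: in \cite{MO} this is handled by covering $X$ by quasi-projective open subschemes, representing the corresponding pieces, and patching the results together (one may alternatively compare $X$ with a projective model via Chow's lemma and descend the group scheme structure). Verifying representability in this generality, and that the resulting object is still separated and locally of finite type over $K$, is the step I expect to be technically hardest; all applications in the present article, however, only require the projective case treated above.
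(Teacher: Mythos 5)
The paper does not actually prove this statement: it is quoted verbatim as Theorem 3.7 of \cite{MO} (Matsumura--Oort) and used as a black box, so there is no ``paper's proof'' to match your approach against. Your argument for the projective case is the standard and correct one: identify an automorphism with its graph, realize $A_X$ as the subfunctor of $\mathrm{Hilb}(X\times_K X)$ of $T$-flat closed subschemes both of whose projections to $X\times_K T$ are isomorphisms, and show this subfunctor is open using properness of the universal family, openness of the quasi-finite locus, and the fibrewise flatness criterion; local finiteness then comes for free from the decomposition of the Hilbert scheme by Hilbert polynomials. This is exactly the argument the paper would need if it chose to prove the theorem in the generality it actually uses, since every scheme appearing in the article (flag varieties, their forms, and products thereof) is projective.

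As a proof of the theorem \emph{as stated}, however, your write-up is incomplete, and you say so yourself: for a proper but non-projective $X$ the scheme $X\times_K X$ need not carry a representable Hilbert functor, and the reduction to the quasi-projective case (Matsumura--Oort's patching argument, or in modern language representability of the Hom-functor for proper flat targets via algebraic spaces plus a separate argument that a group algebraic space locally of finite type over a field is a scheme) is genuinely the hard step and is precisely the content of \cite{MO}. Since the paper itself treats the general statement as a citation rather than something to be proved, this residual gap does not affect anything in the article; but if you intend your text to stand as a proof of the theorem rather than of its projective special case, you must either restrict the hypothesis to ``projective'' or supply the reduction from proper to projective.
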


Armed with the concept of automorphism group schemes, we can make our first step towards describing the automorphism groups of flag varieties.

\begin{prop}
\label{PGL}
Let $K$ be a field, $V$ be a $K$-vector space and $\Fl(\mathbf{d},V)$ be a $K$-flag variety. The group scheme $PGL(V)$ is a closed subscheme of $Aut_K(\Fl(\mathbf{d},V))$.
\end{prop}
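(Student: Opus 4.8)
\emph{Proof sketch (proposal).} The plan is to realise $\PGL(V)$ as a group acting faithfully on $\Fl(\mathbf{d},V)$, and then translate this into a closed embedding of group schemes by combining the universal property of the automorphism group scheme with standard structure theory of algebraic groups. First I would build the natural action of $\GL(V)$ on $\Fl(\mathbf{d},V)$ through the functor of points: for a $K$-algebra $R$ the group $\GL(V)(R)=\Aut_R(V\otimes_K R)$ acts on the set of flags of subbundles of $V\otimes_K R$ of ranks $\mathbf{d}$, compatibly with morphisms of $K$-algebras, so by Yoneda this produces a morphism $\GL(V)\times\Fl(\mathbf{d},V)\to\Fl(\mathbf{d},V)$ which one checks to be a group action. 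The central $\mathbb{G}_m\subset\GL(V)$ of homotheties acts trivially, since a scalar preserves every submodule; as $\GL(V)\to\PGL(V)$ is faithfully flat, fppf descent factors the action through an action $a\colon\PGL(V)\times\Fl(\mathbf{d},V)\to\Fl(\mathbf{d},V)$. By the universal property expressed in Definition \ref{AutSch}, the datum of $a$ is the same as a homomorphism of $K$-group schemes $\rho\colon\PGL(V)\to\Aut_K(\Fl(\mathbf{d},V))$.

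To see that $\rho$ is a closed immersion, note that $\PGL(V)$ is connected, being a quotient of the connected group $\GL(V)$, so $\rho$ factors through the identity component $\Aut^0_K(\Fl(\mathbf{d},V))$, which is of finite type over $K$ because the identity component of a group scheme locally of finite type over a field is an algebraic group. Thus $\rho$ is a homomorphism of algebraic $K$-groups, and it is a standard fact that such a homomorphism has closed image and induces an isomorphism from the source modulo its scheme-theoretic kernel onto that image; hence once $\Ker\rho$ is shown to be trivial, $\rho$ identifies $\PGL(V)$ with a closed subgroup scheme of $\Aut^0_K(\Fl(\mathbf{d},V))$, which in turn is closed in $\Aut_K(\Fl(\mathbf{d},V))$ because the identity component is open and closed there. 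For the triviality of $\Ker\rho$, one discards the degenerate case in which $\Fl(\mathbf{d},V)$ is a single point (all $d_i\in\{0,\dim V\}$) and fixes an index with $0<d:=d_i<\dim V$. An $R$-point of $\Ker\rho$ is represented by some $g\in\GL(V\otimes_K R)$ fixing every flag of type $\mathbf{d}$, hence every rank-$d$ subbundle; reducing to the case where $R$ is a field (in characteristic zero the kernel is smooth, so it suffices to test on $\overline{K}$-points), the elementary fact that the intersection of all $d$-dimensional subspaces containing a given nonzero vector $v$ equals the line $\langle v\rangle$ forces $g(v)\in\langle v\rangle$ for all $v$, so $g$ is a homothety, i.e.\ trivial in $\PGL(V)(R)$.

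The main obstacle is not a single difficult step but the need to invoke two standard external inputs carefully: that the identity component of a locally-finite-type group scheme over a field is of finite type, and that a homomorphism of algebraic groups over a field has closed image with the canonical factorisation through the quotient by its kernel. The functor-of-points construction of the action, the descent along $\GL(V)\to\PGL(V)$, and the eigenvector computation establishing faithfulness are all routine; the only genuine bookkeeping subtlety is that $\Aut_K(\Fl(\mathbf{d},V))$ is a priori merely locally of finite type, which is precisely what passing to the identity component circumvents.
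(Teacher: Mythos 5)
Your proposal is correct and follows essentially the same route as the paper: both obtain the homomorphism $\PGL(V)\to Aut_K(\Fl(\mathbf{d},V))$ from the functor of points, prove the kernel is trivial by checking it has only one point over any field and invoking smoothness in characteristic $0$, and then conclude closedness from standard group-scheme theory. The only differences are cosmetic --- you spell out the faithfulness of the action on field points via the eigenvector/homothety argument (where the paper just cites the embedding $\PGL(V\otimes L)\hookrightarrow\Aut_L(\Fl(\mathbf{d},V\otimes L))$) and you route the closed-immersion step through the identity component and the image factorisation, where the paper directly cites that a smooth homomorphism with trivial kernel is a closed immersion.
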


\begin{proof}
Clearly the functor of points of the group scheme of the projective general linear group $\Hom(-,PGL(V))$ is a subfunctor of $A_{\Fl(\mathbf{d},V)}$ defined in Definition \ref{AutSch}. 
Therefore we have a morphism of group schemes $\varphi:PGL(V)\to Aut_K(\Fl(\mathbf{d},V))$.\\
The kernel of $\varphi$ is trivial. Indeed, $\PGL(V\otimes L)$ embeds into $\Aut_L(\Fl(\mathbf{d},V)\times \Spec L)\cong\Aut_L(\Fl(\mathbf{d},V\otimes L))$ for any field extension $L\vert K$. 
Therefore the kernel has a unique rational point over any field. Since we work in characteristic $0$, this implies that the kernel is trivial (by smoothness).\\
Since the kernel is trivial and $\varphi$ is a smooth morphism (as the characteristic is $0$), $\varphi$ is a closed immersion (Lemma 38.7.8, \cite{Stack}).
\end{proof}

\begin{rem}
\label{na}
Consider flag varieties of the form $\Fl(d_1<...<d_r, V)$, where $0<d_1$, $d_r<\dim V$, $\dim V\geqq 3$ and $\forall i=1,...,r$  $d_i+d_{r+1-i}=\dim V$.
(Notice that the conditions $0<d_1$ and $d_r<\dim V$ are technical assumptions, they do not exclude any isomorphism class of flag varieties.)\\
In the next theorem we will show that non-admissible flag varieties are exactly flag varieties of the above form. In this remark we will construct an order two automorphism  for them, called $\tau$,  which lies outside $\PGL(V)$ and normalizes it.
This strengthens the previous proposition, since the existence of $\tau$ implies that in case of flag varieties of the above form $PGL(V)\rtimes \mathbb{Z}/2\mathbb{Z}$ is a closed subscheme of the automorphism group scheme.\\
The involution $\tau$ can be constructed in the following way.
For an arbitrary flag variety $\Fl(\mathbf{e}, W)$ (not necessarily of the form considered in the beginning of the remark) we can examine the dual map:
\begin{gather*}
*: \Fl (e_1<e_2<...<e_q, W)\to \Fl (m-e_q<m-e_{q-1}<...<m-e_1, W^*)\\
U_1<U_2<...<U_q\mapsto {U_q}^{\perp}<{U_{q-1}}^{\perp}<...<{U_1}^{\perp},
\end{gather*}
where $m=\dim W$, $W^*$ is the dual space of $W$ and for an arbitrary linear subspace $U\leqq W\;$ $U^\perp=\{\varphi\in W^*|\varphi|_U\equiv0\}$ is the annihilator subspace.\\
Consider a flag variety $\Fl(\mathbf{d}, V)$ of the form introduced in the beginning of the remark, and fix a linear automorphism $j_0:V^*\to V$ such that $j_0^{-1}$ maps a (fixed)  basis of $V$ to its dual basis ($V^*$ denotes the dual space of $V$). 
$j_0$ induces an isomorphism $j:\Fl(\mathbf{d}, V^*)\to\Fl(\mathbf{d}, V)$. With a little amount of work it can be checked that the automorphism $\tau=j\circ *$ is an involution outside the projective general linear group, 
and that $\tau$ normalizes the projective general linear group. 
(If $\dim V=2$, then $\tau$ would be an element of the projective general linear group.)
\end{rem}

Our next tool  is the result of H. Tango (Theorem 2 in\cite{Ta}). By the use of Schubert calculus he gave a description of the automorphism groups of flag varieties over algebraically closed fields (of arbitrary characteristic).\\
Just as in the previous remark, when we state the next theorem we will use the technical assumption that a flag does not contain the trivial linear subspace and the whole vector space (i.e. $0<d_1$ and $d_r<\dim V$).
\begin{thm}
\label{Aut(F)}
Let $K$ be a field, $V$ be a $K$-vector space and let $\mathbf{d}$ denote a strictly increasing sequence of integers $d_1 <...<d_r$, where $0<d_1$ and $d_r<\dim V$.
The automorphism group of the $K$-flag variety  $\Fl(\mathbf{d},V)$ is $\PGL(V)$ (with its natural action on the variety), except the case when
$3\leqq \dim V$ and $d_i+d_{r-i+1}=\dim V$ for all $i=1,...,r$. In this later case the automorphism group is $\PGL(V)\rtimes\mathbb{Z}/2\mathbb{Z}$.
\end{thm}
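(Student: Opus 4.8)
\emph{Proof proposal.} The plan is to prove the statement over the algebraically closed field $\overline{K}$ and then descend it to $K$. Over $\overline{K}$ the variety $\Fl(\mathbf{d},V_{\overline{K}})$ is a projective homogeneous space under $\PGL(V_{\overline{K}})$, and the computation of its automorphism group is exactly Tango's theorem (Theorem 2 in \cite{Ta}): $\Aut_{\overline{K}}(\Fl(\mathbf{d},V_{\overline{K}}))$ is $\PGL(V_{\overline{K}})$, unless $3\leqq\dim V$ and $d_i+d_{r-i+1}=\dim V$ for all $i$, in which case it is $\PGL(V_{\overline{K}})\rtimes\mathbb{Z}/2\mathbb{Z}$. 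First I would upgrade this to an identity of group schemes. By Proposition \ref{PGL} the group scheme $PGL(V_{\overline{K}})$ is a closed subscheme of $Aut_{\overline{K}}(\Fl(\mathbf{d},V_{\overline{K}}))$, and in the symmetric case $PGL(V_{\overline{K}})\rtimes\mathbb{Z}/2\mathbb{Z}$ is one by the involution $\tau$ of Remark \ref{na}. Since the characteristic is $0$, this automorphism group scheme is smooth, in particular reduced, and, by Tango's theorem, the above closed subscheme contains every closed point of it. A closed subscheme of a reduced scheme locally of finite type over an algebraically closed field that contains all of its closed points is the whole scheme, so $Aut_{\overline{K}}(\Fl(\mathbf{d},V_{\overline{K}}))$ equals $PGL(V_{\overline{K}})$, respectively $PGL(V_{\overline{K}})\rtimes\mathbb{Z}/2\mathbb{Z}$, as a $\overline{K}$-group scheme.

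Now I descend. By Remark \ref{PofAut} the formation of the automorphism group scheme commutes with base change, so $Aut_K(\Fl(\mathbf{d},V))\times_{\Spec K}\Spec\overline{K}\cong Aut_{\overline{K}}(\Fl(\mathbf{d},V_{\overline{K}}))$. By Proposition \ref{PGL} (and Remark \ref{na} in the symmetric case), $PGL(V)$, respectively $PGL(V)\rtimes\mathbb{Z}/2\mathbb{Z}$, is a closed subscheme of $Aut_K(\Fl(\mathbf{d},V))$, and its base change to $\overline{K}$ is the $\overline{K}$-group scheme just computed, that is, all of $Aut_{\overline{K}}(\Fl(\mathbf{d},V_{\overline{K}}))$. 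A closed immersion that becomes an isomorphism after the faithfully flat base change $\Spec\overline{K}\to\Spec K$ is itself an isomorphism, so $Aut_K(\Fl(\mathbf{d},V))$ equals $PGL(V)$, respectively $PGL(V)\rtimes\mathbb{Z}/2\mathbb{Z}$, as a $K$-group scheme. Passing to $K$-points via Remark \ref{PofAut} gives $\Aut_K(\Fl(\mathbf{d},V))=\PGL(V)$ with its natural action, respectively $\PGL(V)\rtimes\mathbb{Z}/2\mathbb{Z}$. (Equivalently, one can descend by Galois invariants: since $\overline{K}/K$ is Galois in characteristic $0$ and the automorphism functor is a sheaf, $\Aut_K(\Fl(\mathbf{d},V))=\Aut_{\overline{K}}(\Fl(\mathbf{d},V_{\overline{K}}))^{\Gal(\overline{K}/K)}$, and $\PGL(V_{\overline{K}})^{\Gal(\overline{K}/K)}=\PGL(V)$ by Hilbert's Theorem~90 applied to $1\to\mathbb{G}_m\to\GL(V)\to\PGL(V)\to 1$, while $\tau$ is already defined over $K$.)

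The entire difficulty lies in the algebraically closed input; everything after it is formal descent. If one wished to reprove Tango's theorem, the natural route is: $\operatorname{Pic}(\Fl(\mathbf{d},V_{\overline{K}}))$ is free of rank $r$, any automorphism preserves it together with its nef cone, and hence permutes the $r$ Grassmannian projections $\Fl(\mathbf{d},V_{\overline{K}})\to\Fl(d_i,V_{\overline{K}})$; for a single Grassmannian $\operatorname{Pic}\cong\mathbb{Z}$ is generated by the Pl\"ucker bundle, so each automorphism lies in $\PGL(\wedge^{d_i}V_{\overline{K}})$, and one checks that the subgroup fixing the Pl\"ucker image is $\PGL(V_{\overline{K}})$, enlarged by the duality $U\mapsto U^{\perp}$ precisely when $2d_i=\dim V$ (with $\dim V\geqq 3$); finally one reassembles the projections to recover $\PGL(V_{\overline{K}})$, the extra involution surviving exactly under the symmetry condition $d_i+d_{r-i+1}=\dim V$. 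I expect this reassembly --- keeping track of how the several projections and their stabilizers interact --- to be the main obstacle in a self-contained argument, which is presumably why the paper imports Tango's theorem rather than redoing it.
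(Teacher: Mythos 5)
Your proposal is correct, and its overall architecture is the same as the paper's: both upgrade the statement to an identity of group schemes, both feed in Tango's theorem over an algebraically closed field together with the closed immersions supplied by Proposition \ref{PGL} and (in the symmetric case) the involution $\tau$ of Remark \ref{na}, and both descend via the base-change property of Remark \ref{PofAut} before taking rational points. The two key steps are implemented differently, though. To identify the group scheme over an algebraically closed field you use that a closed subscheme of a smooth (hence reduced) scheme locally of finite type containing every closed point is the whole scheme; the paper instead first reduces an arbitrary characteristic-zero ground field to $\mathbb{Q}$ (every flag variety is a base change of one defined over $\mathbb{Q}$) and then compares the $\mathbb{Q}$-group scheme with its complexification by matching dimensions, to pin down the identity component as $PGL$, and by counting connected components, with Remark \ref{na} accounting for the second component. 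For the descent back to $K$ you invoke faithfully flat descent of isomorphisms along $\Spec\overline{K}\to\Spec K$, whereas the paper reads the answer off directly from $Aut_{K}(\Fl(\mathbf{d},V_K))\cong Aut_{\mathbb{Q}}(\Fl(\mathbf{d},W_\mathbb{Q}))\times\Spec K$. Your route is somewhat more direct and avoids the detour through $\mathbb{Q}$ and $\mathbb{C}$, which in the paper serves to anchor the argument to the complex case where Tango's computation is classically stated; since the paper itself asserts Tango's theorem for arbitrary algebraically closed fields in its opening line, nothing is lost, and both arguments are complete.
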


\begin{proof}
When $K$ is algebraically closed, this is Tango's theorem  (Theorem 2 in\cite{Ta}).\\
As a first step towards describing the case when $K$ is not algebraically closed, 
we prove a stronger version of the theorem. We prove that the automorphism group scheme has the form which naturally corresponds to the form of the automorphism group described by the theorem, 
i.e. it is $PGL(V)$ or a $PGL(V)\rtimes\mathbb{Z}/2\mathbb{Z}$ accordingly.\\
Observe that the automorphism group scheme  of a complex flag variety has the desired form. Indeed, by Tango's theorem, 
the automorphism group of a complex flag variety is either $\PGL(V)$ or $\PGL(V)\rtimes\mathbb{Z}/2\mathbb{Z}$.
Therefore the group of the closed points of the automorphism group scheme of a complex flag variety gives back the groups described by our theorem. 
Combining this fact with the result of Proposition \ref{PGL}, which states that $PGL(V)$ is a closed subscheme of the automorphism group scheme of a complex flag variety,
we can conclude our claim for the automorphism group scheme of an arbitrary $\mathbb{C}$-flag variety.\\
As the next step, note that it is enough to show that the our claim for automorphism group schemes holds for flag varieties over $\mathbb{Q}$. Indeed, let $\Fl(\mathbf{d},V_K)$ be an arbitrary flag variety over an arbitrary field $K$. 
By choosing a basis of $V_K$, we can find a $\mathbb{Q}$-vector space $W_{\mathbb{Q}}$ such that $W_\mathbb{Q}\otimes K\cong V_K$ and $\Fl(\mathbf{d},V_K)\cong\Fl(\mathbf{d},W_\mathbb{Q})\times\Spec K$. 
Hence Remark \ref {PofAut} implies that $Aut_{K}(\Fl(\mathbf{d},V_K))\cong Aut_{\mathbb{Q}}(\Fl(\mathbf{d},W_\mathbb{Q}))\times\Spec K$. The result follows, as $PGL(W_\mathbb{Q})\times\Spec K\cong PGL(V_K)$.\\
Let $\Fl(\mathbf{d},U_{\mathbb{Q}})$ be an arbitrary flag variety over $\mathbb{Q}$. Since base changing the ground field does not affect dimensions, 
the group schemes $PGL(U_\mathbb{Q})$ and $Aut_\mathbb{Q}(\Fl(\mathbf{d},U_\mathbb{Q}))$ has the same dimension by the complex case. 
As $PGL(U_\mathbb{Q})$ is connected, and it is a closed group scheme of $Aut_\mathbb{Q}(\Fl(\mathbf{d},U_\mathbb{Q}))$ (Proposition \ref{PGL}), we conclude that it is the identity component.\\
A similar logic applies to the number of connected components. Indeed, the number of connected components cannot decrease after base changing the ground field. 
Hence using the case of complex flag varieties and the result of Remark \ref{na}, our claim for the automorphism group schemes follows.\\
Now we can turn back our attention to the automorphism groups. We conclude our proof by taking rational points of the automorphism group schemes and using Remark \ref{PofAut}. 
\end{proof}

\begin{rem}
\label{CharAdmFl} 
Notice that Theorem \ref{Aut(F)} gives a new characterization of admissible flag varieties. This new characterization only uses dimensions of the linear subspaces of a flag of the variety and the dimension of the under lying vector space. 
\end{rem}

\begin{rem}
\label{NaturalAction}
The projective general linear group has a natural action on the set of  $d$-dimensional linear subspaces of the underlying vector space (for every fixed $d$, where $0\leqq d\leqq n$ and $n$ is the dimension of the underlying vector space). 
These actions are compatible with the action of the projective general linear group on the flags of the vector space. Sometimes we use this observation without further notice.
A similar statement holds for twisted Galois actions on admissible flag varieties (check Remark \ref{NaturalAction2}).\\
The automorphism group $\PGL(V)\rtimes \mathbb{Z}/2\mathbb{Z}$ of a non-admissible flag variety also has a natural action on the set of the union of $d$-dimensional and $(n-d)$-dimensional linear subspaces of the underlying vector space.
However some group elements swap the dimensions. A similar kind of claim can be formulated for the twisted Galois actions on non-admissible flag varieties (check Remark \ref{NaturalAction2}).
\end{rem}

\subsection{Galois descent}
We collect results about Galois descent and fields in general. First we start with a couple of technical claims. The next lemma can be proved by standard techniques using the finiteness condition built in the definition of a variety.
\begin{lem}
\label{RedFin}
Let $K$ be a field. Let $X$ and $Y$ be $\overline{K}$-varieties and $\varphi:X\to Y$ be a morphism between them. There exists a finite Galois extension $L|K$ such that $X$, $Y$ and $\varphi$ are defined over $L$.
More precisely, there exists $X'$, $Y'$ $L$-varieties and $\varphi':X'\to Y'$ morphism between them such that $X'\times \Spec \overline{K}\cong X$, $ Y'\times \Spec \overline{K}\cong Y$ and $\varphi'\times id\cong \varphi$.
\end{lem}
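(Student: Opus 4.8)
The plan is to exploit the finiteness conditions built into the notion of a variety in order to reduce all of the data $X$, $Y$, $\varphi$ to a finite subset of $\overline{K}$, and then to take $L$ to be the normal closure over $K$ of the subfield generated by that subset.

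First I would handle $X$ and $Y$. Being of finite type over $\overline{K}$, the scheme $X$ is quasi-compact, hence admits a finite affine open cover $X=\bigcup_i \Spec A_i$ with each $A_i\cong\overline{K}[t_1,\dots,t_{n_i}]/I_i$ and each ideal $I_i$ generated by finitely many polynomials; every such polynomial involves only finitely many coefficients from $\overline{K}$. The intersections $\Spec A_i\cap\Spec A_j$ are likewise cut out inside $\Spec A_i$ by finitely many polynomials, and the gluing isomorphisms identifying the two descriptions of $\Spec A_i\cap\Spec A_j$ are given by finitely many rational functions, hence again by finitely many elements of $\overline{K}$. The same applies to $Y$. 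Next I would choose the affine covers compatibly with $\varphi$: after refining if necessary, I may assume $\varphi$ maps each chart $\Spec A_i$ of $X$ into some chart $\Spec B_{j(i)}$ of $Y$, and then $\varphi|_{\Spec A_i}$ corresponds to a $\overline{K}$-algebra homomorphism $B_{j(i)}\to A_i$, determined by the images of the finitely many generators of $B_{j(i)}$, hence by finitely many elements of $\overline{K}$. Gathering all these coefficients for $X$, for $Y$ and for $\varphi$, together with the finitely many coefficients witnessing that the diagonals of $X$ and $Y$ are closed immersions (separatedness), we obtain a finite subset $S\subset\overline{K}$.

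Now set $L$ to be the normal closure over $K$ of $K(S)$ inside $\overline{K}$. Since $S$ is finite, $K(S)\vert K$ is finite, and since the characteristic is $0$ every algebraic extension is separable, so $L\vert K$ is a finite Galois extension. By construction, all the defining equations, the affine charts, the gluing data and the local formulas for $\varphi$ have their coefficients in $L$; they therefore assemble into $L$-schemes $X'$ and $Y'$ and an $L$-morphism $\varphi'\colon X'\to Y'$ whose base changes to $\overline{K}$ recover $X$, $Y$ and $\varphi$. Finally, $X'$ and $Y'$ are again varieties: finite type and separatedness are witnessed by the finite data we included in $S$, and integrality of $X'$ follows because $X'\times\Spec\overline{K}\cong X$ is integral (an affine $L$-algebra whose base change to $\overline{K}$ is a domain is itself a domain). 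The only real work is the bookkeeping in the previous paragraph --- arranging a single finite affine cover simultaneously adapted to $X$, to $Y$ and to $\varphi$, and checking that each scheme-theoretic property needed is of finite character and hence survives the descent --- and this uses nothing beyond quasi-compactness of Noetherian schemes and elementary commutative algebra.
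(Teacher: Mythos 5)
Your argument is correct and is precisely the standard spreading-out argument the paper alludes to --- the paper gives no proof at all, only the remark that the lemma ``can be proved by standard techniques using the finiteness condition built in the definition of a variety.'' The bookkeeping with finite affine covers, the finite set $S$ of coefficients, and the normal closure of $K(S)$ (finite and Galois since $S\subset\overline{K}$ is algebraic over $K$ and the characteristic is $0$) is exactly what is intended, and your descent of integrality and separatedness to $X'$ and $Y'$ is sound.
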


\begin{rem}
\label{finiteness}
Let the $K$-variety $X$ be a form of  a flag variety. By Lemma \ref{RedFin}, we can find a finite Galois extension $L|K$ such that $X\times\Spec L\cong\Fl(\mathbf{d},W_L)$.
Indeed, applying the lemma to the isomorphism between $X\times\Spec \overline{K}$ and $\Fl(\mathbf{d},V_{\overline{K}})$ proves the claim.
\end{rem}

\begin{rem}
If the $K$-variety $X$ is a form of a flag variety, then $X$ is projective. Indeed if $X\times\Spec\overline{K}$ is projective, then the same holds for $X$ as well (Proposition 14.55  in \cite {GW}).
\end{rem}

\begin{defn}
\label{SF}
Let $K$ be  a field, and let the $K$-variety $X$ be a form of a flag variety. If $L\vert K$ is a field extension such that $X\times\Spec{L}\cong \Fl(\mathbf{d},V_L)$, then we call $L$ a splitting field for $X$.
By Remark \ref{finiteness} $L|K$ can chosen to be a finite Galois extension.
\end{defn}

Now we turn our attention to results about descents. This part of the subsection is mainly based on \cite{Ja}.
\begin{defn}
Let $L\vert K$ be a Galois extension with Galois group $\Gamma$. We call a pair $(X,T)$ a quasi-projective $L$-scheme equipped with a twisted Galois action, if $X$ is a quasi-projective $L$-scheme and $T:\Gamma\to \Aut_K(X)$ is a group homomorphism satisfying the following commutative diagram (for every $\sigma\in\Gamma$):
 \[
\xymatrix{
X \ar[d] \ar[r]^{T(\sigma)} & X \ar[d]\\
\Spec L\ar[r]^{S(\sigma)} & \Spec L
}
\]
where $S(\sigma):\Spec L\to \Spec L$ is the morphism of schemes induced by $\sigma^{-1}:L\to L$.\\
If no confusion can arise we denote the pair $(X,T)$ simply by $X$.
(Observe that $S(\sigma)$ is induced by $\sigma^{-1}$ since there is an \textit{antiequivalence} of categories between affine schemes and rings. Therefore using the inverse is necessary to define an action of the Galois group.)  
\end{defn}

The following theorem can be found in \cite{Ja} (Theorem 2.2.b).
\begin{thm}
\label{equivalence}
Let $L\vert K$ be a finite Galois extension with Galois group $\Gamma$. There is an equivalence between the category of quasi-projective $K$-schemes and the category of quasi-projective $L$-schemes equipped with a twisted $\Gamma$-action.
The equivalence functor is given by $X\mapsto X\times \Spec L$.
\end{thm}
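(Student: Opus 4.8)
The plan is to show that the functor $F\colon X\mapsto X\times\Spec L$ is well defined, fully faithful, and essentially surjective. For well-definedness, given a quasi-projective $K$-scheme $X$ the base change $X_L:=X\times_{\Spec K}\Spec L$ is a quasi-projective $L$-scheme, and setting $T(\sigma):=\mathrm{id}_X\times S(\sigma)$ makes $(X_L,T)$ an object of the target category: by the universal property of the fibre product $T(\sigma)$ is a $K$-automorphism of $X_L$ lying over $S(\sigma)$, and $\sigma\mapsto T(\sigma)$ is a group homomorphism because $\sigma\mapsto S(\sigma)$ is. A $K$-morphism $f\colon X\to Y$ induces $f\times\mathrm{id}\colon X_L\to Y_L$, which commutes with the twisted actions; hence $F$ is a functor.

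For full faithfulness I would show that restriction gives a bijection $\Hom_K(X,Y)\xrightarrow{\sim}\{\,\Gamma\text{-equivariant }K\text{-morphisms }X_L\to Y_L\,\}$, and reduce this to the affine statement of Galois descent for algebras: if $B$ is an $L$-algebra carrying a semilinear $\Gamma$-action (so $\sigma(\lambda b)=\sigma(\lambda)\sigma(b)$), then $A:=B^{\Gamma}$ is a $K$-algebra and the canonical map $A\otimes_K L\to B$ is an isomorphism; consequently a $\Gamma$-equivariant $L$-algebra homomorphism $B'\to B$ restricts to a $K$-algebra homomorphism $(B')^{\Gamma}\to B^{\Gamma}$, and $(-)^{\Gamma}$ and $-\otimes_K L$ are mutually inverse. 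This affine assertion itself rests on two facts valid because $L/K$ is finite Galois with group $\Gamma$: the map $L\otimes_K L\to\prod_{\sigma\in\Gamma}L$, $x\otimes y\mapsto(x\,\sigma(y))_{\sigma}$, is an isomorphism of $L$-algebras, and $L/K$ is faithfully flat; running the usual faithfully flat descent for modules (equivalently, invoking the additive Hilbert~90 / Speiser's theorem) yields $A\otimes_K L\cong B$. The global statement follows by covering $X$ and $Y$ by affine opens and noting that the opens of the form $U\times_K\Spec L$ are $\Gamma$-stable, so the local descent data glue.

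The heart of the argument is essential surjectivity, i.e. effectivity of descent. Let $(Z,T)$ be a quasi-projective $L$-scheme with twisted $\Gamma$-action. I would first produce a covering of $Z$ by $\Gamma$-stable affine opens. Pick an ample invertible sheaf $\mathcal{L}$ on $Z$ and set $\mathcal{M}:=\bigotimes_{\sigma\in\Gamma}T(\sigma)^{*}\mathcal{L}$; since each $T(\sigma)$ is an automorphism of the scheme $Z$, each $T(\sigma)^{*}\mathcal{L}$ is ample, hence so is the tensor product $\mathcal{M}$, and $\mathcal{M}$ is canonically $\Gamma$-linearized because $T(\tau)^{*}\mathcal{M}\cong\bigotimes_{\sigma}(T(\sigma)T(\tau))^{*}\mathcal{L}=\bigotimes_{\rho}T(\rho)^{*}\mathcal{L}=\mathcal{M}$, these identifications satisfying the cocycle condition. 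Now fix a closed point $z\in Z$; its orbit $\{T(\sigma)z:\sigma\in\Gamma\}$ is finite, so by ampleness there are $m$ and $s\in\Gamma(Z,\mathcal{M}^{\otimes m})$ with $Z_s$ affine and containing the whole orbit. The norm section $\mathrm{Nm}(s):=\bigotimes_{\sigma\in\Gamma}T(\sigma)^{*}s$, read off as a section of $\mathcal{M}^{\otimes m\lvert\Gamma\rvert}$ through the linearization, is $\Gamma$-invariant, and its non-vanishing locus equals $\bigcap_{\sigma}T(\sigma)^{-1}(Z_s)$, a finite intersection of affine opens in a separated scheme, hence an affine open; it is $\Gamma$-stable and contains $z$. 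This gives the desired cover $\{V_i=\Spec B_i\}$. On each $V_i$ the twisted action restricts to a semilinear $\Gamma$-action on the $L$-algebra $B_i$, so by the affine descent above $A_i:=B_i^{\Gamma}$ is a $K$-algebra with $A_i\otimes_K L\cong B_i$ $\Gamma$-equivariantly; because the overlaps $V_i\cap V_j$ are $\Gamma$-stable, the gluing isomorphisms descend, producing a $K$-scheme $X$ with $X\times\Spec L\cong Z$ as objects with twisted $\Gamma$-action. Finally $X$ is quasi-projective: it is of finite type and separated since these properties descend along the faithfully flat morphism $\Spec L\to\Spec K$, and the $\Gamma$-invariant sections of the linearized ample sheaf $\mathcal{M}$ descend to an ample sheaf on $X$.

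The main obstacle is precisely the reduction to the affine case in the last paragraph. For a general scheme with descent data, descent need not be effective, and the whole point of the quasi-projectivity hypothesis is to furnish, via a $\Gamma$-linearized ample sheaf and the norm construction, a covering by \emph{$\Gamma$-stable} affine opens; once that is in hand, everything else is bookkeeping sitting on top of faithfully flat descent and the decomposition $L\otimes_K L\cong\prod_{\sigma\in\Gamma}L$.
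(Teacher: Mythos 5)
Your argument is correct in substance, but note that the paper does not prove this statement at all: it is quoted as Theorem 2.2.b of the cited reference [Ja] (Jahnel), so there is no in-paper proof to compare against. What you have written is essentially the classical Weil/Galois descent argument that underlies that reference: base change with $T(\sigma)=\mathrm{id}\times S(\sigma)$ for well-definedness, reduction of full faithfulness to semilinear descent for algebras via $L\otimes_K L\cong\prod_{\sigma\in\Gamma}L$ and faithful flatness, and, for effectivity, the crucial use of quasi-projectivity to cover $Z$ by $\Gamma$-stable affine opens, followed by gluing the affine descents and descending an ample sheaf to see that the resulting $K$-scheme is again quasi-projective. This is exactly where the quasi-projectivity hypothesis enters, as you correctly emphasize. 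Two small points of bookkeeping you gloss over: with the paper's convention that $T(\sigma)$ lies over $S(\sigma)$ (induced by $\sigma^{-1}$) and that $T$ is a homomorphism, the induced maps on an invariant affine ring $B$ satisfy $t_{\sigma\tau}=t_\tau\circ t_\sigma$ and are semilinear over $\sigma^{-1}$, so one must set $\sigma\cdot b:=t_{\sigma^{-1}}(b)$ to obtain the genuine semilinear action you feed into affine descent; and the norm-section construction is a detour you do not need, since, as you yourself observe, $\bigcap_{\sigma}T(\sigma)^{-1}(Z_s)$ is already a $\Gamma$-stable affine open (finite intersection of affine opens in a separated scheme) containing the chosen orbit, once one notes that each $T(\sigma)^*\mathcal{L}$ is ample because ampleness is intrinsic to the underlying scheme and so is preserved by the $K$-automorphisms $T(\sigma)$. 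Neither point is a gap, and your proof would serve as a self-contained substitute for the citation.
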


\begin{rem}
Since the theorem is about equivalence of categories, it also says that Galois equivariant morphisms descends to morphism of the underlying $K$-schemes.
\end{rem}

\begin{defn}
\label{Galois action}
Let $L\vert K$ be a Galois extension with Galois group $\Gamma$ and let $V_L$ be an n-dimensional vector space over $L$. Let $b=(v_1,...,v_n)$ be a basis of $V_L$.
There is a twisted Galois action $A_b:\Gamma\to \Lin_K(V_L)$ defined by
\begin{gather*}
A_b(\sigma):V_L\to V_L \\
v=\alpha_1 v_1+\alpha_2 v_2+...+\alpha_n v_n\mapsto \sigma(\alpha_1) v_1+\sigma(\alpha_2) v_2+...+\sigma(\alpha_n) v_n, 
\end{gather*}
where the $\alpha_i$'s are coefficients from the field $L$ ($i=1,...,n$) and $\sigma\in\Gamma$ is an arbitrary element of the Galois group. 
For a flag variety $\Fl(\mathbf{d},V_L)$ this induces a twisted Galois action, denoted by $B_b:\Gamma\to\Aut_K(\Fl(\mathbf{d},V_L))$.
 \[
\xymatrix{
\Fl(\mathbf{d},V_L) \ar[d] \ar[r]^{B_b(\sigma)} & \Fl(\mathbf{d},V_L) \ar[d]\\
\Spec L\ar[r]^{S(\sigma)} & \Spec L
}
\]
It might seem counterintuitive that the diagram contains $S(\sigma)$, which is induced by $\sigma^{-1}$. However after realizing that this means we pull back functions using $\sigma^{-1}$ , we can also realize that it forces us to use $\sigma$ when 
we want to `push forward' scalars.\\
Notice that if $T$ is an arbitrary twisted Galois action on $\Fl(\mathbf{d},V_L)$, then for every $\sigma\in\Gamma$  the morphism $T(\sigma){B_b(\sigma)}^{-1}$ is an element of the automorphism group of the flag variety,
therefore $T(\sigma)$ can be written as $T(\sigma)=a_\sigma\circ B_b(\sigma)$ where $a_\sigma\in \Aut_L(\Fl(\mathbf{d},V_L))$. (Of course $a_\sigma$ also depends on the basis $b$, although we decided to omit it in the notation.)
\end{defn}

\begin{rem}
\label{NaturalAction2}
Let $L\vert K$ be a Galois extension with Galois group $\Gamma$, $V_L$ be an $L$-vector space and $\Fl(\mathbf{d}, V_L)$ be an $L$-flag variety with a twisted Galois action $T:\Gamma\to\Aut_K(\Fl(\mathbf{d},V_L))$. 
Choose a basis of $V_L$, denote it by $b$. We saw in the previous definition that $T(\sigma)=a_\sigma\circ B_b(\sigma)$.\\ 
Assume that the flag variety is admissible, then $a_\sigma$ is an element of $\PGL(V_L)$, therefore it has a natural action on the set of  linear subspaces of $V_L$. 
$B_b(\sigma)$ can also be endowed with a natural action on the set of  linear subspaces of $V_L$ (via $A_b(\sigma)$). This enables us to endow $T(\sigma)$ with a natural action on the set of linear subspaces of $V_L$ . 
Moreover, this action is compatible with the action of $T(\sigma)$ on the flags. In formula
$T(\sigma)(Z_1<..<Z_r)=(c_\sigma\circ A_b(\sigma))(Z_1)<...<(c_\sigma\circ A_b(\sigma))(Z_r)$ for any flag $Z_1<...<Z_r\in \Fl(\mathbf{d}, V_L)$. Sometimes we use this observation without further notice.\\
If the flag variety is non-admissible then we can also formulate a similar claim. 
However if $a_\sigma\not\in \PGL(V)$ then the $d_i$-dimensional linear subspace of the image flag $T(\sigma)(Z_1<..<Z_r)$ depends on the  $(\dim V-d_i)$-dimensional linear subspace of the flag $Z_1<...<Z_r$.
The existence of these `dimension-swapping' morphisms can pose problems when we try to construct Galois equivariant morphisms from non-admissible flag varieties.
 \end{rem}

\begin{rem}
\label{H1}
 If $L\vert K$ is a finite Galois extension (such that $L\leqq\overline{K}$) with Galois group $\Gamma$ and $V_L$ is an $L$-vector space, then $\{\sigma\mapsto a_{\sigma}\}$ gives an element in the first group cohomology
$\mathrm{H}^1(\Gamma,\Aut_L(\Fl(\mathbf{d},V_L))$. The elements of the first group cohomology are in 1-to-1 correspondence with the forms of $\Fl(\mathbf{d},V_L)\times\Spec\overline{K}\cong\Fl(\mathbf{d},V_L\otimes\overline{K})$ split by $L$. For further informations on this, see Theorem 14.88 in\cite{GW}. Also Theorem 3.6 and Theorem 4.5 in \cite{Ja} give results of similar flavour in the case of Brauer-Severi varieties.
\end{rem}

\begin{thm}
\label{G-vb}
Let $L\vert K$ be a finite Galois extension with Galois group $\Gamma$. Let $X,Y$ be quasi-projective $L$-schemes equipped with twisted Galois actions, 
and let $\phi:X\to Y$ be a Galois equivariant morphism of $L$-schemes such that the triple $(X,Y,\phi)$ forms a vector bundle. 
Moreover, let the Galois action respect the vector bundle structure (respect the addition and twist the multiplication by scalar operations).  
Then there exist $X',Y'$ quasi-projective $K$-schemes and $\phi':X'\to Y'$ morphism of $K$-schemes such that $(X',Y',\phi')$ forms a vector bundle and $X'\times \Spec L\cong X$, $ Y'\times \Spec L\cong Y$, $\phi'\times id\cong \phi$.
\end{thm}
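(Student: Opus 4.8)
The plan is to descend every piece of data at once by means of the equivalence of categories in Theorem \ref{equivalence}, and then to verify that the descended object is again a vector bundle; the only point that is not purely formal is local triviality, which I would obtain from faithfully flat (i.e. Galois) descent.

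First I would descend the ambient schemes and the projection. Since $X$ and $Y$ are quasi-projective $L$-schemes carrying twisted Galois actions and $\phi$ is Galois equivariant, Theorem \ref{equivalence} produces quasi-projective $K$-schemes $X'$, $Y'$ and a morphism $\phi'\colon X'\to Y'$ of $K$-schemes, unique up to isomorphism, with $X'\times\Spec L\cong X$, $Y'\times\Spec L\cong Y$ and $\phi'\times\mathrm{id}\cong\phi$ compatibly with the actions. The structure morphisms $X\to X'$ and $Y\to Y'$ are base changes of $\Spec L\to\Spec K$, hence faithfully flat; I would use this to transport equalities of morphisms between the $K$-level and the $L$-level.

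Next I would descend the vector bundle structure. The vector bundle $(X,Y,\phi)$ is the same datum as the zero section $\epsilon\colon Y\to X$, the fibrewise addition $\alpha\colon X\times_Y X\to X$ and the scalar multiplication $\mu\colon \mathbb{A}^1_L\times_L X\to X$ over $Y$, where $\mathbb{A}^1_L=\mathbb{A}^1_K\times_K\Spec L$ carries its canonical twisted Galois action (the one inducing $S(\sigma)$ on the base and the identity on the coordinate). The schemes $X\times_Y X$ and $\mathbb{A}^1_L\times_L X$ are again quasi-projective, carry the product twisted Galois actions, and are the base changes to $L$ of $X'\times_{Y'}X'$ and of $\mathbb{A}^1_{X'}:=\mathbb{A}^1_K\times_K X'$. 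By hypothesis $\epsilon$ and $\alpha$ are equivariant and $\mu$ is equivariant for this twisted action on $\mathbb{A}^1_L$ — this is exactly the assumption that the Galois action respects the addition and twists the scalar multiplication. Hence, again by Theorem \ref{equivalence}, they descend to $K$-morphisms $\epsilon'\colon Y'\to X'$, $\alpha'\colon X'\times_{Y'}X'\to X'$ and $\mu'\colon\mathbb{A}^1_{X'}\to X'$ over $Y'$ whose base changes to $L$ recover $\epsilon,\alpha,\mu$. The vector bundle axioms (associativity and commutativity of $\alpha$, unit axiom for $\epsilon$, distributivity and unitality of $\mu$ with respect to the ring-scheme structure of $\mathbb{A}^1$) amount to the commutativity of a finite list of diagrams of $K$-morphisms between these schemes; each such diagram commutes after the faithfully flat base change to $L$, where it is the given structure on $X/Y$, hence it commutes over $K$. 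So $(X',Y',\phi',\epsilon',\alpha',\mu')$ satisfies all the formal identities of a vector bundle over $Y'$.

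The remaining, and main, point is to show that $X'\to Y'$ is Zariski-locally on $Y'$ isomorphic to $\mathbb{A}^n_{Y'}$ compatibly with these structure morphisms. Here I would pass to sheaves: a vector bundle of rank $n$ over $Y$ is the same as a locally free $\mathcal{O}_Y$-module $\mathcal{F}$ of rank $n$, and the twisted Galois action on $X$ respecting the bundle structure induces on $\mathcal{F}$ a semilinear $\Gamma$-action lying over the semilinear action of $\Gamma$ on $\mathcal{O}_Y$. The $\mathcal{O}_{Y'}$-module $\mathcal{F}'$ corresponding to $X'$ (e.g. the degree-one part of $\phi'_*\mathcal{O}_{X'}$ for the grading defined by restricting $\mu'$ to $\mathbb{G}_{m,Y'}$, up to dual depending on conventions) base changes along the faithfully flat $Y\to Y'$ to $\mathcal{F}$; since $\mathcal{F}$ is locally free of rank $n$ and local freeness of a fixed rank descends along faithfully flat morphisms, $\mathcal{F}'$ is locally free of rank $n$ ($n$ being constant because $Y'$ is integral). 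Equivalently, faithfully flat descent for quasi-coherent sheaves applied to $\mathcal{F}$ together with its semilinear $\Gamma$-action yields a locally free sheaf on $Y'$. Either way $X'\to Y'$ is a rank-$n$ vector bundle, and combined with the isomorphisms produced in the first two steps this proves the theorem. The main obstacle is precisely this last step: ensuring that the descended "$\mathbb{A}^1_{Y'}$-module scheme" is genuinely Zariski-locally trivial and not merely an abstract module object, which is what the passage to locally free sheaves and faithfully flat descent secures.
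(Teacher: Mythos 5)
Your proposal is correct and rests on the same key ingredient as the paper's proof: Galois (faithfully flat) descent for locally free sheaves together with the dictionary between vector bundles and locally free sheaves of finite rank, which is exactly what the paper cites (Theorem 2.2.c in Jahnel). You descend the total space as a scheme first and then verify local triviality via the sheaf, while the paper descends the sheaf and takes the associated bundle, but these are two phrasings of the same argument, with yours supplying the formal details the paper leaves implicit.
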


\begin{proof}
By Theorem 2.2.c in \cite{Ja}, a locally free sheaf of finite rank $\mathcal{E}$ equipped with a Galois action compatible with the Galois action on the underlying quasi-projective $L$-scheme $Y$ comes from
 a locally free sheaf (of the same rank) on the quasi-projective $K$-scheme $Y'$, where $Y'\times \Spec L\cong Y$. Since there is a 1-to-1 canonical correspondence between finite rank vector bundles and locally free sheaves of finite rank, 
 the result follows. 
\end{proof}

\section{Rational maps of flag varieties}
\label{rational maps}
Let $K$ be a field and $\overline{K}$ be its algebraic closure. Let $V$ be a vector space over $\overline{K}$. Assume $V=V_1\oplus V_2$  is a direct sum decomposition, $\dim V=n$ and $\dim V_i=n_i$ ($i=1,2$). 
Consider the strictly increasing sequences of nonnegative integers $\mathbf{d}=(d_1,d_2,...,d_p)$ and $\mathbf{e}=(e_1,e_2,...,e_q)$, where $d_p\leqq n_1<e_1$  and $e_q\leqq n$.  We are going to investigate the rational maps 
\begin{gather*}
\phi_1:\Fl(\mathbf{d},V)\dashrightarrow \Fl(\mathbf{d},V_1)\\
Z_1<...<Z_p \mapsto pr(Z_1)<...<pr(Z_p)
\end{gather*} 
where $pr:V\to V_1$  is the projection along $V_2$, $Z_i$'s ($i=1,...,p$) are the vector spaces forming the flag ($\dim Z_i=d_i$),
 \begin{gather*}
\phi_2:\Fl(\mathbf{e},V)\dashrightarrow \Fl(\mathbf{e}-n_1,V_2)\\  
W_1<...<W_q \mapsto W_1\cap V_2<...<W_q\cap V_2
\end{gather*}
where 
$W_j$'s ($j=1,...,q$) are the vector spaces forming the flag ($\dim W_j=e_j$),
\begin{gather*}
\psi:\Fl(\mathbf{d}<\mathbf{e},V)\dashrightarrow \Fl(\mathbf{d},V_1)\times \Fl(\mathbf{e}-n_1,V_2)\\
Z_1<...<Z_p<W_1<...<W_q \mapsto (pr(Z_1)<...<pr(Z_p), W_1\cap V_2<...<W_q\cap V_2)
\end{gather*}
where $Z_i$'s ($i=1,...,p$) and $W_j$'s ($j=1,...,q$) are the vector spaces forming the flag ($\dim Z_i=d_i$, $\dim W_j=e_j$).\\
 Clearly all of these are rational maps. 
$\phi_1$ is defined on the open subvariety
 \[U_1=\{Z_1<...<Z_p\in\Fl(\mathbf{d},V)\vert Z_p\cap V_2=\{0\} \},\]
$\phi_2$ is defined on the open subvariety 
\begin{multline*}
U_2=\{W_1<...<W_q\in\Fl(\mathbf{e},V)\vert W_1\pitchfork V_2\}=\\
\{W_1<...<W_q\in\Fl(\mathbf{e,V})\vert W_1+V_2=V\},
\end{multline*} 
and $\psi$ is defined on the open subvariety 
\begin{multline*}
U=\{Z_1<...<Z_p<W_1<...<W_q\in\Fl(\mathbf{d}<\mathbf{e},V)\vert\\
Z_p\cap V_2=\{0\}, W_1+V_2=V\}.
\end{multline*}
We can check that $U_1$, $U_2$ and $U$ are open subvarieties.
Indeed, let $\alpha_p$ be the tautological vector bundle on the flag variety $\Fl(\mathbf{d},V)$ corresponding to the $d_p$-dimensional linear subspaces of $V$, 
and let $\beta_1$ be the tautological vector bundle on the flag variety $\Fl(\mathbf{e}, V)$ corresponding to the $e_1$-dimensional linear subspaces of $V$. 
Let $\rho$ be the global section of the hom-vector bundle  $Hom_{\overline{K}}(\alpha_p, V/V_2)$ induced by the projection $V\to V/V_2$, and 
let $\tau$ be the global section of the hom-vector bundle  $Hom_{\overline{K}}(\beta_1, V/V_2)$ induced by the projection $V\to V/V_2$.
$U_1$ is the open locus where $\rho$ has maximal rank, while
$U_2$ is the open locus where $\tau$ has maximal rank.  
Combining the above arguments, we can also show that $U$ is an open subvariety.

\begin{prop}
\label{vb}
Using the notation introduced in this section, the following holds.
The triples $(U_1,\Fl(\mathbf{d},V_1),\phi_1)$, $(U_2,\Fl(\mathbf{e}-n_1,V_2),\phi_2)$ and $(U,\Fl(\mathbf{d},V_1)\times\Fl(\mathbf{e}-n_1,V_2),\psi)$ form vector bundles.
\end{prop}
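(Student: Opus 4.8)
The plan is, for each of the three triples, to identify the source variety together with the given map as the total space of an explicit vector bundle over the target, the bundle being built from the tautological subbundles of the small flag varieties $\Fl(\mathbf{d},V_1)$ and $\Fl(\mathbf{e}-n_1,V_2)$; in each case the identification will be a bijective morphism onto a smooth variety, which by Zariski's Main Theorem forces it to be an isomorphism (a bijective morphism of varieties is, in characteristic zero, birational, and a flag variety is smooth, hence normal, so the same is true of the open sets $U_1,U_2,U$). Write $pr\colon V\to V_1$ and $pr'\colon V\to V_2$ for the projections associated with $V=V_1\oplus V_2$. First consider $\phi_1$. Let $\mathcal{S}$ be the tautological rank $d_p$ subbundle on $\Fl(\mathbf{d},V_1)$, with fibre $Z'_p$ over a flag $Z'_1<\dots<Z'_p$, and let $\mathcal{E}_1$ be the vector bundle with fibre $\Hom(Z'_p,V_2)$ over that flag (concretely $\mathcal{E}_1=\mathcal{S}^{\vee}\otimes_{\overline{K}}V_2$, reading $V_2$ as a trivial bundle). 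The fibrewise graph construction $(Z'_{\bullet},g)\mapsto\bigl(\{v+g(v)\mid v\in Z'_1\}<\dots<\{v+g(v)\mid v\in Z'_p\}\bigr)$, for $g\in\Hom(Z'_p,V_2)$, defines a morphism $\Theta_1$ from the total space of $\mathcal{E}_1$ to $\Fl(\mathbf{d},V)$ whose image lies in the open set $U_1$, since $\{v+g(v)\mid v\in Z'_p\}\cap V_2=\{0\}$; and it is bijective onto $U_1$, the inverse on points being $Z_{\bullet}\mapsto\bigl(pr(Z_{\bullet}),\,pr'\circ(pr|_{Z_p})^{-1}\bigr)$, the restriction $pr|_{Z_p}$ being an isomorphism onto its image precisely on $U_1$. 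Hence $\Theta_1$ is an isomorphism, it evidently carries $\phi_1$ to the bundle projection, and it respects the linear structure on the fibres. (Alternatively one writes the inverse out directly, $(pr|_{Z_p})^{-1}$ being given on $U_1$ by Cramer's rule.)

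The case of $\phi_2$ is the same once one takes the chain condition into account. Fix a flag $T_1<\dots<T_q$ in $V_2$, and let $W_1<\dots<W_q$ be a flag in $V$ with $W_j\cap V_2=T_j$ for all $j$ and $W_1+V_2=V$. Then $W_1\cap T_j=T_1$, so a dimension count forces $W_j=W_1+T_j$ for every $j$: the upper members of the flag are determined by $W_1$ and $T_{\bullet}$. In turn $W_1$, being a subspace of $V$ with $W_1\cap V_2=T_1$ and $W_1+V_2=V$, is the preimage under $V\to V/T_1=V_1\oplus(V_2/T_1)$ of the graph of a unique linear map $\bar{h}\in\Hom(V_1,V_2/T_1)$. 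Hence, writing $\mathcal{S}'$ for the tautological rank $e_1-n_1$ subbundle on $\Fl(\mathbf{e}-n_1,V_2)$ (fibre $T_1$) and $\mathcal{E}_2$ for the vector bundle with fibre $\Hom(V_1,V_2/T_1)$, the assignment $(T_{\bullet},\bar{h})\mapsto(W_1<\dots<W_q)$ with $W_j=W_1+T_j$ is a bijective morphism from the total space of $\mathcal{E}_2$ onto $U_2$, hence an isomorphism over $\Fl(\mathbf{e}-n_1,V_2)$, carrying $\phi_2$ to the bundle projection.

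Finally, for $\psi$, note that since $d_p\leqq n_1<e_1$ all the inclusions in a chain $Z_1<\dots<Z_p<W_1<\dots<W_q$ are automatically strict once the dimensions are fixed, so a point of $U$ is the same datum as a pair $(Z_{\bullet},W_{\bullet})$ with $Z_{\bullet}\in U_1$, $W_{\bullet}\in U_2$ and $Z_p\subseteq W_1$. Put $B=\Fl(\mathbf{d},V_1)\times\Fl(\mathbf{e}-n_1,V_2)$ with its two projections $p_1,p_2$. Transporting the above through $\Theta_1$ and $\Theta_2$, such a pair becomes a point $(g,\bar{h})$ of the total space of $p_1^{*}\mathcal{E}_1\oplus p_2^{*}\mathcal{E}_2$, and the condition $Z_p\subseteq W_1$ translates into the linear equation $\pi_{T_1}\circ g=\bar{h}|_{Z'_p}$, where $\pi_{T_1}\colon V_2\to V_2/T_1$ is the quotient. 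Thus $U$ gets identified with the total space of the kernel of the morphism of vector bundles on $B$ sending $(g,\bar{h})$ to $\pi_{T_1}\circ g-\bar{h}|_{Z'_p}$, with values in the vector bundle with fibre $\Hom(Z'_p,V_2/T_1)$; this morphism is surjective on fibres (lift a given $\ell\colon Z'_p\to V_2/T_1$ to some $g$ and take $\bar{h}=0$), so its kernel is a subbundle, in particular a vector bundle on $B$, and by the same Zariski's Main Theorem argument the identification is an isomorphism under which $\psi$ becomes the bundle projection. The points that require actual care are the chain-condition analysis for $\phi_2$ — seeing that the base of the bundle is all of $\Fl(\mathbf{e}-n_1,V_2)$ rather than some partial flag variety of pairs — and, for $\psi$, the fact that $Z_p\subseteq W_1$ cuts out a subbundle, which is exactly why surjectivity of that bundle map is the thing to check; the remaining verifications are routine linear algebra together with the uniform Zariski's Main Theorem step.
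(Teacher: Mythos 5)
Your proof is correct and takes essentially the same approach as the paper: the same graph parametrizations of the fibres by $\Hom(S_p,V_2)$ and $\Hom(V_1,V_2/T_1)$, the same reduction of the chain condition $Z_p\leqq W_1$ to the kernel of the surjective bundle map $(f,g)\mapsto u\circ f-g\circ i$. The only difference is cosmetic: you justify the passage from the fibrewise bijection to a global isomorphism via Zariski's Main Theorem, where the paper simply notes that the two inverse constructions globalize.
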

\begin{proof}
To see this, first, consider the fiber of $\phi_1$ over an arbitrary flag $S_1<...<S_p\in \Fl(\mathbf{d},V_1) $. Notice that if $Z_1<...<Z_p$ is in the fiber, then it is uniquely determined by $Z_p$. 
Indeed $pr$ induces and isomorphism between $Z_p$ and $S_p$, so there is a unique linear subspace of $Z_p$ which maps to $S_i$ ($i=1,...,p$).\\ 
The $d_p$-dimensional linear subspaces of $V$ which are mapped to $S_p$ are parametrized by $\Hom(S_p,V_2)$. If $f\in\Hom(S_p,V_2)$, then the graph of $f$ considered as a linear subspace of $V$ determines $Z_p$.
More precisely 
\begin{equation} 
\label{S}
Z_p=\{v+f(v)\vert v\in S_p\}.
\end{equation}
On the other hand, $Z_p$ gives an element in $\Hom(S_p, V_2)$ by the composition $pr'\circ t$, where $t:S_p \to Z_p$ is the inverse of the linear isomorphism between $Z_p$ and $S_p$ induced by $pr$
and $pr':V\to V_2$ is the projection along $V_1$. These two constructions are inverse to each other, which shows our claim on the fiber.\\
The argument can be globalized.  It shows that $U_1$ is isomorphic to the total space of the vector bundle corresponding to the locally trivial sheaf of finite rank $Hom_{\mathcal{O}}(\gamma_p,V_2\otimes\mathcal{O})$, 
where $\gamma_p$ is the sheaf of sections of the tautological bundle  of the flag variety $\Fl(\mathbf{d},V_1)$ corresponding to the $d_p$-dimensional linear subspaces and $\mathcal{O}$ is the structure sheaf of $\Fl(\mathbf{d},V_1)$.\\
A similar argument shows that $U_2$ is the total space of the vector bundle corresponding to the locally trivial sheaf of finite rank $Hom_{\mathcal{O}}(V_1\otimes\mathcal{O},(V_2\otimes \mathcal{O})/ \eta_1)$,
where $\eta_1$ is the sheaf of sections of the tautological bundle of the flag variety $\Fl(\mathbf{e}-n_1,V_2)$ corresponding to the $e_1-n_1$-dimensional linear subspaces and $\mathcal{O}$ is the structure sheaf of $\Fl(\mathbf{e}-n_1, V_2)$. 
(By the properties of $\eta_1$, $(V_2\otimes\mathcal{O})/\eta_1$ is a locally trivial sheaf of finite rank).\\
Indeed, again, notice first that an element $W_1<...<W_q\in \Fl(\mathbf{e},V)$, which is in the fiber over $T_1<...<T_q\in \Fl(\mathbf{e}-n_1,V_2)$, is uniquely determined by $W_1$. Since $W_j$ should contain both $W_1$ and $T_j$,
moreover $W_1\cap T_j=W_1\cap V_2=T_1$, we have $W_j=W_1+T_j$ by dimension counting ($j=1,...,q$).\\
The $e_1$-dimensional  linear subspaces $W_1<V$, such that $W_1\cap V_2=T_1$, are parametrized by $\Hom(V_1,V_2/T_1)$. For $g\in \Hom(V_1,V_2/T_1)$ consider the linear subspace 
\begin{equation}
\label{W'}
W'_1=\{u(v)+g(v)\in V/T_1\vert v\in V_1\} 
\end{equation}
of the quotient space $V/T_1$, where we use $u$ to denote the quotient morphism $u:V\to V/T_1$. Finally, let
\begin{equation} 
\label{W}
W_1=u^{-1}(W'_1).
\end{equation} 
Conversely, assume $W_1$ is given. Identify $V_1$ , $V_2/T_1$ and $W_1/T_1$ with linear subspaces of $V/T_1$. Let $p_1:V/T_1\to V_1$ be the projection along $V_2/T_1$, and $p_2:V/T_1\to V_2/T_1$ be the projection along $V_1$.
$p_1$ induces an isomorphism $q_1:W_1/T_1\to V_1$. Let $g\in\Hom(V_1,V_2/T_1)$ be $g=p_2\circ q_1^{-1}$. These two constructions are inverse to each other. The argument globalizes. This proves our claim.\\
For $\psi$ we can use similar constructions. The fiber over $(S_1<...<S_p, T_1<...<T_q)$ is parametrized by a linear subspace $E<\Hom(S_p,V_2)\times \Hom(V_1,V_2/T_1)$ for which the constructions, 
described in the previous paragraphs, yield linear subspaces $Z_p$ and $W_1$  satisfying $Z_p<W_1$.\\ 
This condition is equivalent to $Z_p\leqq W_1$ by dimension counting, which in turn is equivalent to $Z_p+T_1\leqq W_1$. Using the projection $u:V\to V/T_1$, our condition is $u(Z_p)\leqq u(W_1)$. 
By the construction of $Z_p$ and $W_1$ from $(f,g)\in \Hom(S_p,V_2)\times \Hom(V_1,V_2/T_1)$, the condition is equivalent to
 \[(u+u\circ f)(S_p)\leqq (u+g)(V_1).\] 
Consider the identification $V/T_1=V_1\oplus V_2/T_1$. 
\begin{multline*}
\{(v,u\circ f(v))\in V_1\oplus V_2/T_1\vert v\in S_p\}=(u+u\circ f)(S_p)\leqq \\ 
(u+g)(V_1)=\{(v,g(v))\in V_1\oplus V_2/T_1\vert v\in V_1\} 
\end{multline*}
This is equivalent to $u\circ f= g\circ i$, where $i:S_p\to V_1$ is the inclusion map. Let F be the surjective map of linear spaces given by
\begin{gather*}
F:\Hom(S_p,V_2)\times \Hom(V_1,V_2/T_1)\to \Hom(S_p,V_2/T_1)\\
(f,g)\mapsto u\circ f-g\circ i.
\end{gather*}
Then $E=\Ker F$ . Once again, this construction globalizes.
$U\subset \Fl(\mathbf{d}<\mathbf{e}, V)$ is the total space of the vector bundle corresponding to a locally trivial sheaf of finite rank $\mathcal{E}$. 
($\mathcal{E}$ is the kernel of a surjective morphism of locally trivial sheaves of finite rank, hence it is locally trivial of finite rank.)
\end{proof}

\begin{rem}
Let $A_1$, $A_2$ and $A$ be the complements of the open subvarieties $U_1$, $U_2$ and $U$ in the appropriate flag varieties and endow them with the reduced scheme structure.
A short calculation shows that
\begin{align*}
A_1=\{Z_1<...<Z_p\in\Fl(\mathbf{d},V)\vert \dim (Z_p\cap V_2)>0\},
\end{align*}
\begin{align*}
A_2=\{W_1<...<W_q\in\Fl(\mathbf{e},V)\vert \dim(W_1\cap V_2)>e_1-n_1\},
\end{align*}
\begin{multline*}
A=\{Z_1<...<Z_p<W_1<...<W_q\in\Fl(\mathbf{d}<\mathbf{e},V)\vert\\
\dim (Z_p\cap V_2)>0 \; \; \text{or} \; \dim(W_1\cap V_2)>e_1-n_1\}.
\end{multline*}
Hence $A_1$, $A_2$ and $A$ are union of Schubert cells. Recall that an $a$-dimensional Schubert cell is isomorphic to the $a$-dimensional affine space $\mathbb{A}^a$. 
(For more details on Schubert cells the interested reader can consult with Chapter 10.2 in \cite{Fu97}.) 
\end{rem}

\begin{rem}
\label{L-vb}
By Lemma \ref{RedFin} we can find a finite Galois extension $L|K$  (where $L\leqq \overline{K}$) such that $\phi_1, \phi_2$, $\psi$, $U_1$, $U_2$, $U$  and $A_1$, $A_2$, $A$ are defined over $L$. 
Moreover we can require that, the decompositions of $A_1$, $A_2$ and $A$ into the union of Schubert cells exist over  the field $L$. 
In particular, this implies that, the sets of $L$-rational points are dense in  $A_1$, $A_2$ and $A$ (as the same hold for the affine spaces).\\
Furthermore, since a vector bundle structure over a variety can be defined only using finitely many elements from the ground field, we can secure that Proposition \ref{vb} also holds over $L$.\\
During Section \ref{Proof} we will work over a finite Galois extension $L|K$ and use the notation introduced in this section (more precisely its corresponding counterpart which is defined over the field $L$).
\end{rem}

\section{Group actions on forms of flag varieties}
\label{ComAct}
We recall some theorems about birational automorphism groups. To start with, we introduce the notion of strongly Jordan groups. It first appeared  in \cite{BZ17}.

\begin{defn}[Definition 1.1 in\cite{BZ17}]
\label{sJ}
A group $G$ is called strongly Jordan if it is Jordan, and there exists a constant $r\in\mathbb{N}$ such that every finite Abelian subgroup $A\leqq G$ can be generated by $r$ elements, 
in other words the rank of an arbitrary finite Abelian subgroup is smaller than $r$.
\end{defn}

\begin{thm}
\label{strJor}
Let $X$ be a variety. If $X$ is either rationally connected or non-uniruled, then the birational automorphism group $\Bir(X)$ is strongly Jordan.
\end{thm}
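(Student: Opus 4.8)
Theorem~\ref{strJor} collects results of Prokhorov--Shramov and of Bandman--Zarhin, and the plan is to recall their strategy. In both cases the starting point is the same: $\Bir$ is a birational invariant, so we may assume $X$ is smooth and projective, and given a finite subgroup $G\le\Bir(X)$ the regularisation lemma (\cite{PS2}, Lemma~3.1) provides a smooth projective $X_G$, birational to $X$, on which $G$ acts biregularly. It then remains to bound, uniformly in $G$, the index and the rank of an abelian normal subgroup of $G$, and this is done by separating the two hypotheses.

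\textbf{Rationally connected case.} Here one runs a $G$-equivariant Minimal Model Program on $X_G$. Since $X$ is rationally connected, one arrives at a $G$-equivariant Mori fibre space $Y\to Z$ on which $G$ still acts biregularly, with $Y$ having $\mathbb{Q}$-factorial terminal singularities and $Z$ of dimension $<\dim X$. If $\dim Z=0$ then $Y$ is a $G$-Fano variety of dimension $\le\dim X$, and Birkar's theorem on the boundedness of Fano varieties (the BAB conjecture) says that such $Y$ form a bounded family; over a bounded family the automorphism group schemes form a bounded family of algebraic groups, so the finite subgroups acting on them are strongly Jordan with constants depending only on $\dim X$. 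If $\dim Z>0$ one recurses: $G$ acts on $Z$ (rationally connected, of smaller dimension) with kernel $G'$ acting biregularly on the generic fibre $Y_\eta$, which, over the algebraic closure of the function field of $Z$, is again a $\mathbb{Q}$-factorial terminal Fano variety of dimension $<\dim X$ --- so $G'$ is strongly Jordan with an absolute constant by BAB, while the action on $Z$ is controlled by induction on dimension. Assembling the pieces along the resulting tower of Fano fibrations, keeping track of \emph{both} index and rank at every step (this is precisely why the ``strongly Jordan'' formulation, due to \cite{BZ17}, is the right inductive hypothesis), yields constants $J$ and $r$ depending only on $\dim X$.

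\textbf{Non-uniruled case.} Here $K_X$ is pseudoeffective. Following Prokhorov--Shramov, one exploits the fact that (after an appropriate birational reduction) the elements of $\Bir(X)$ act on a finite-rank lattice --- the Néron--Severi lattice of a suitable model --- preserving its intersection-theoretic structure, and on the Albanese variety. The resulting homomorphism $\Bir(X)\to\GL_N(\mathbb{Z})$ has \emph{bounded} finite subgroups in its image by Minkowski's lemma, with a bound depending only on $N$, while its kernel --- maps acting trivially on the Néron--Severi lattice --- is, up to finite index, contained in a connected commutative algebraic group acting through the Albanese torus and the fibres of the Albanese map, hence is strongly Jordan with absolute constants. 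Since the quotient is bounded, the extension lemma (\cite{Po11}, Lemma~2.11), together with an elementary count of ranks, shows that $\Bir(X)$ is strongly Jordan.

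\textbf{Main obstacle.} The substance of the rationally connected case lies entirely in Birkar's boundedness theorem for Fano varieties: without it one obtains only a Jordan constant depending on $X$ rather than on $\dim X$, and the rank bound fails as well. Granting BAB, the remaining difficulty is the group-theoretic bookkeeping needed to propagate the bounds --- on index \emph{and} on rank --- through the tower of Fano fibrations produced by the equivariant MMP. In the non-uniruled case the delicate point is instead the preliminary reduction: one must arrange that birational self-maps act on a \emph{fixed} finite-rank lattice and identify the kernel of that action, which requires understanding how such maps interact with $\mathbb{Q}$-factoriality and with the Albanese fibration.
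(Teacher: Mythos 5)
Your proposal is correct and follows essentially the same route as the paper: the theorem is a compilation of Prokhorov--Shramov's Jordan theorems for rationally connected and non-uniruled varieties together with Birkar's boundedness of Fano varieties, plus the rank bound from Remark~6.9 of \cite{PS2}. The paper simply cites Theorem~1.8 of \cite{PS16}, Theorem~1.8 of \cite{PS2}, and Theorem~1.1 of \cite{Bi16}, whereas you unpack the proofs of those cited results; the mathematical content and dependencies are the same.
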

\begin{proof}
If $X$ is rationally connected then the birational automorphism group is Jordan by Theorem 1.8 of \cite{PS16} and Theorem 1.1 of \cite{Bi16}. If $X$ is non-uniruled then the birational automorphism group is Jordan by Theorem 1.8 of \cite{PS2}.\\
Furthermore, Remark 6.9 of \cite{PS2} and Theorem 1.1 of \cite{Bi16} shows that the ranks of the finite Abelian subgroups of the birational automorphism group of an arbitrary variety is bounded by a constant depending only on the variety.\\
Putting together these results proves the theorem. 
\end{proof}

\begin{thm}
Let $X$ be a variety. Let $G\leqq\Bir(X)$ be an arbitrary subgroup of the birational automorphism group. Assume that $G$ is not bounded.
Then there exist elements of $G$ of finite and arbitrary large order. 
\end{thm}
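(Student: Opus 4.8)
The plan is to prove the contrapositive in spirit: assuming $G$ is not bounded, I will produce, inside $G$, elements of finite order whose orders are arbitrarily large. The first step is to invoke Theorem \ref{strJor}: every birational automorphism group of a variety that is rationally connected or non-uniruled is strongly Jordan, so there is a uniform bound $r$ on the rank of finite abelian subgroups of $\Bir(X)$ — but wait, the statement as written only assumes $X$ is a variety and $G\leqq\Bir(X)$ is arbitrary, with no hypothesis on $X$. So I would instead use the weaker fact quoted inside the proof of Theorem \ref{strJor}, namely that for an \emph{arbitrary} variety $X$ the ranks of finite abelian subgroups of $\Bir(X)$ are bounded by a constant $r=r(X)$ (Remark 6.9 of \cite{PS2} together with Theorem 1.1 of \cite{Bi16}). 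This much holds with no restriction on $X$, which is exactly the generality we need.

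Granting the rank bound $r$, the argument is elementary group theory. Since $G$ is not bounded, for every $N\in\mathbb{N}$ there is a finite subgroup $H_N\leqq G$ with $|H_N|>N$. Now I want to extract from a large finite group, whose abelian subgroups have rank $\leqq r$, an element of large order. The key observation is: if a finite group $H$ has the property that every abelian (equivalently, every cyclic) subgroup has order at most $m$, then $|H|$ is bounded in terms of $m$ and $r$ alone. This follows because a finite group all of whose elements have order $\leq m$ and whose abelian subgroups have rank $\leq r$ is forced to be of bounded order — one can, for instance, first bound the order of a Sylow $p$-subgroup (an abelian subgroup of a $p$-group of exponent dividing $m$ and rank $\leq r$ has order $\leq m^r$, and a $p$-group is generated by such; more carefully one uses that a finite $p$-group of exponent $p^k$ and such that every abelian subgroup has rank $\leq r$ has bounded order, a standard fact), and then only finitely many primes can divide $|H|$ since each contributes a factor $\geq p$. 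Rather than chase the sharpest constant, I would phrase it as: there is a function $C(m,r)$ such that $|H|\leq C(m,r)$ whenever every element of $H$ has order $\leq m$.

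Putting these together: fix $r=r(X)$. If the orders of finite-order elements of $G$ were bounded by some $m$, then every finite subgroup $H\leqq G$ would have all elements of order $\leq m$, hence $|H|\leq C(m,r)$, contradicting the unboundedness of $G$. Therefore $G$ contains elements of finite order of arbitrarily large order, as claimed. The main obstacle — really the only nontrivial point — is the group-theoretic lemma that bounded exponent plus bounded abelian rank forces bounded order for finite groups; I would isolate this as a separate lemma. One clean route avoiding heavy classification is: let $p_1<\dots<p_s$ be the primes dividing $|H|$; each Sylow $p_i$-subgroup $P_i$ has exponent dividing $m$, and by a theorem on $p$-groups of bounded exponent and bounded rank (every abelian subgroup having rank $\leq r$) its order is $\leq f(m,r)$; moreover $s$ is bounded since $\prod p_i \leq |H|$ while each $p_i \leq m$ (a prime $p$ divides $|H|$ only if $H$ has an element of order $p\leq m$), so $s\leq \pi(m)$; hence $|H|=\prod|P_i|\leq f(m,r)^{\pi(m)}=:C(m,r)$. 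This is the step I expect to write out most carefully.
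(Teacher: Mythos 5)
Your argument is correct, but it takes a genuinely different route from the paper. The paper does not reduce everything to a single group-theoretic lemma about $\Bir(X)$; instead it invokes Proposition 6.2 of \cite{PS2} to realize an arbitrary finite subgroup $G_0\leqq G$ as an extension of a group $G_{rc}\leqq\Bir(X_{rc})$ by a group $G_{nu}\leqq\Bir(X_{nu})$ via the MRC-fibration, applies the \emph{strong Jordan} property (Theorem \ref{strJor}) to each piece separately, and then only needs the trivial observation that a finite \emph{abelian} group of rank $\leqq r$ and exponent $<N$ has order at most $N^r$; the Jordan constants $J_{rc}, J_{nu}$ absorb the passage from the abelian subgroup to the whole of $G_{rc}$ and $G_{nu}$. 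You instead work directly with $G_0$ itself, using only the rank bound for finite abelian subgroups of $\Bir(X)$ for an arbitrary variety (the same citation, Remark 6.9 of \cite{PS2} plus \cite{Bi16}, that the paper uses inside the proof of Theorem \ref{strJor}), at the price of a nontrivial finite-group-theory lemma: bounded exponent plus bounded abelian rank forces bounded order. That lemma is true and standard --- for the Sylow step, take a maximal normal abelian subgroup $A$ of the $p$-group $P$; then $C_P(A)=A$, so $P/A$ embeds in $\Aut(A)$, and since $|A|\leqq m^r$ this bounds $|P|$ in terms of $m$ and $r$; the count of relevant primes by $\pi(m)$ then finishes it as you say --- but as you note it is the one step you must actually write out, since ``$P$ is generated by its abelian subgroups'' alone proves nothing about $|P|$. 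In exchange, your route avoids the MRC decomposition and the Jordan constants entirely, so it is lighter on the geometry and heavier on the group theory; both arguments are sound.
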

\begin{proof}
Assume that there exists a constant $N\in\mathbb{N}$ such that if $g\in G$ is an element of finite order, then the order of $g$ is smaller than $N$. We will show that this implies the boundedness of $G$.\\
By Proposition 6.2 of \cite{PS2} for an arbitrary variety $X$ (using the MRC-fibration) we can fix a rationally connected variety $X_{rc}$ over some function field 
and a non-uniruled variety $X_{nu}$ over the ground field such that an arbitrary finite subgroup $G_0\leqq G(\leqq\Bir(X))$
is an extension of finite groups $G_{rc}$ and $G_{nu}$, where $G_{rc}\leqq\Bir(X_{rc})$ and $G_{nu}\leqq\Bir(X_{nu})$. 
(Note that, if the MRC-fibration is trivial, then either the group $G_{rc}$ or the group $G_{nu}$ is trivial, which does not pose any problem in our argument.)\\
We know that $\Bir(X_{rc})$ and $\Bir(X_{nu})$ are strongly Jordan groups. Denote the corresponding Jordan constants by $J_{rc}$ and $J_{nu}$ respectively, 
and denote the constants bounding the ranks of finite Abelian subgroups by $r_{rc}$ and $r_{nu}$ respectively. Since $X_{rc}$ and $X_{nu}$ only depend on $X$, 
$J_{rc}$, $J_{nu}$ and $r_{rc}$, $r_{nu}$ only depend on $X$ as well.\\
We will use the following easy observation. Let $A$ be a finite Abelian group. Assume that $A$ can be generated by $r$ elements and the order of an arbitrary element $a\in A$ is smaller than $N$. Then the cardinality of $A$ is smaller than $r^N$.\\
$G_{rc}$ is isomorphic to a finite subgroup of $G_0$, therefore the order of an arbitrary element of $G_{rc}$ is smaller than $N$. 
$G_{rc}$ has an Abelian subgroup of rank at most $r_{rc}$  and of index smaller than $J_{rc}$. Hence $|G_{rc}|<J_{rc}r_{rc}^N$.\\
$G_{nu}$  is the homomorphic image of $G_0$, therefore the order of an arbitrary element of $G_{nu}$ is smaller than $N$. 
$G_{nu}$ has an Abelian subgroup of rank at most $r_{nu}$ and of index smaller than $J_{nu}$. Hence $|G_{nu}|<J_{nu}r_{nu}^N$.\\
Therefore $|G_0|<J_{rc}J_{nu}(r_{rc}r_{nu})^N$. Since $G_0$ was an arbitrary finite subgroup of $G$, and all constants depend only on $X$, $G$ is bounded. This contradiction finishes the proof. 
 \end{proof}

\begin{rem}
A similar argument proves the following claim.
Let $X$ be a variety, then there exists a constant $m\in\mathbb{N}$, depending only on $X$, such that any finite subgroup of the birational automorphism group $\Bir(X)$
can be generated by $m$ elements.
\end{rem}

\begin{defn}
\label{NatGalAct}
Let $K$ be a field and let the $K$-variety $X$ be a form of a flag variety. Let $L$ be a splitting field for $X$ such that $L|K$ is a Galois extension. Fix an isomorphism $\varphi: \Fl(\mathbf{d},V)\to X\times\Spec L$. 
Let  $T:\Gal(L|K)\to \Aut_K(\Fl(\mathbf{d},V))$ be the twisted Galois action defined by $T(\sigma)=\varphi^{-1}\circ (id\times S(\sigma))\circ\varphi$, where $\sigma$ is an arbitrary element of $\Gal(L|K)$ and
$S(\sigma):\Spec L\to \Spec L$ is induced by $\sigma^{-1}: L\to L$. 
We call $T$ the Galois action corresponding to $\varphi$. 
\end{defn}

Let the variety $X$ be a form of an admissible flag variety and assume that its automorphism group is not bounded. 
In the next lemma we will examine the effect of the commutation of the automorphism group of $X$ (viewed as a subgroup of the automorphism group of the corresponding flag variety) 
and the corresponding twisted Galois action.

\begin{lem}
\label{splitting}
Let $K$ be a field containing all roots of unity. Let the $K$-variety $X$ be a form of an admissible flag variety. Let $L$ be a splitting field for $X$ such that $L|K$ is a Galois extension.
Assume that the automorphism group $\Aut_K(X)$ is not bounded. Let $X\times \Spec L\cong\Fl(\mathbf{d},V)$ (where $V$ is an $L$-vector space), and let $T:\Gal(L|K)\to \Aut_K(\Fl(\mathbf{d},V))$ be the corresponding twisted Galois action.
We can choose a basis $b$ of $V$ such that it splits as $b=b_1\cup b_2$ ($b_1,b_2\neq\emptyset$), giving rise to a direct sum decomposition $V=V_1\oplus V_2$ 
such that $\forall\sigma\in\Gal(L|K)$: $T(\sigma)=a_\sigma\circ B_b(\sigma)$ (see Definition \ref{Galois action}),
where $a_\sigma\in \Aut_L(\Fl(\mathbf{d},V))=\PGL(V)$ respects this decomposition, i.e. an arbitrary lift $c_\sigma\in \GL(V)$ of $a_\sigma$ is contained in $\GL(V_1)\times \GL(V_2)<\GL(V)$.    
\end{lem}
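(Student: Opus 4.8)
The plan is to exploit the fact that, because $\Fl(\mathbf d,V)$ is admissible, $\Aut_K(X)$ sits inside $\PGL(V)$ and commutes there with the twisted Galois action, and then to read off a $T$-stable decomposition of $V$ from the eigenspaces of an automorphism of large order. First I would fix the set-up: admissibility depends only on the dimensions occurring (Remark~\ref{CharAdmFl}), which do not change under base change, so $\Fl(\mathbf d,V)$ is again admissible and $\Aut_L(\Fl(\mathbf d,V))=\PGL(V)$ by Theorem~\ref{Aut(F)}; hence $g\mapsto g\times\mathrm{id}$ embeds $\Aut_K(X)$ into $\PGL(V)$. A direct computation with $T(\sigma)=\varphi^{-1}\circ(\mathrm{id}\times S(\sigma))\circ\varphi$ (Definition~\ref{NatGalAct}) shows that, inside $\Aut_K(\Fl(\mathbf d,V))$, the image of $\Aut_K(X)$ commutes with every $T(\sigma)$.

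Since $\Aut_K(X)\leqq\Bir(X)$ is unbounded, the theorem above on unbounded subgroups of birational automorphism groups yields some $g\in\Aut_K(X)$ of finite order $N>\dim_L V$. Let $h\in\PGL(V)$ be its image, pick a lift $\hat h\in\GL(V)$, so $\hat h^{\,N}=\lambda\cdot\mathrm{id}$ for some $\lambda\in L^{\times}$, and let $V\otimes_L\overline K=\bigoplus_{j=1}^{k}\overline V_{j}$ be the eigenspace decomposition of $\hat h$ over $\overline K$, with distinct eigenvalues $\mu_{1},\dots,\mu_{k}$. All ratios $\mu_{i}/\mu_{j}$ are $N$-th roots of unity, hence lie in $K\subseteq L$ by hypothesis; writing $\mu_{j}=\mu_{1}\zeta_{j}$, the ``pattern'' $S=\{\zeta_{1},\dots,\zeta_{k}\}$ lies in $\mu_{N}$, and a short computation shows that the order $N$ of $h$ equals the exponent of the subgroup of $\mu_N$ generated by $\{\zeta_{i}/\zeta_{j}\}$; in particular $k\le\dim_L V<N$.

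The crux is then the following. Because $\hat h$ has coefficients in $L$, the group $\Gal(\overline K/L)$ permutes the $\overline V_{j}$, and because all $\mu_{i}/\mu_{j}$ lie in $K$ this permutation multiplies every $\zeta_{j}$ by a single common root of unity $\delta_{\rho}\in\mu_{N}$. Likewise, lifting $T(\sigma)$ to a $\sigma$-semilinear self-map $t_{\sigma}$ of $V$ (legitimate by admissibility, cf.\ Remark~\ref{NaturalAction2}), the relation $h\,T(\sigma)=T(\sigma)\,h$ forces $t_{\sigma}^{-1}\hat h\,t_{\sigma}$ to be a scalar multiple of $\hat h$, so $t_{\sigma}$ permutes the eigenspaces $\overline V_{j}$, again — since $\mu_{i}/\mu_{j}\in K$ — by multiplying every $\zeta_{j}$ by one common $\epsilon_{\sigma}\in\mu_{N}$. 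Put $F=\langle\,\delta_{\rho},\,\epsilon_{\sigma}\,\rangle\subseteq\mu_{N}$; then $F$ stabilises $S$, so $S$ is a union of $F$-cosets. If $F$ had order $\ge k$ it would be the full stabiliser $\{\epsilon\in\mu_N:\epsilon S=S\}$, $S$ would be a single $F$-coset, the group generated by $\{\zeta_i/\zeta_j\}$ would equal $F$, and $h$ would have order $|F|=k\le\dim_L V$, contradicting $N>\dim_L V$. Hence $|F|<k$, so $S$ splits into at least two $F$-cosets; the corresponding grouping of the $\overline V_{j}$ is stable under $\Gal(\overline K/L)$ (as $\delta_\rho\in F$) and under every $t_\sigma$ (as $\epsilon_\sigma\in F$), so by Galois descent for vector spaces it is defined over $L$ and gives $V=V_1\oplus V_2$ with $V_1,V_2\ne 0$ and each $t_\sigma$-stable. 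Choosing a basis $b=b_1\cup b_2$ adapted to this decomposition, $B_b(\sigma)$ preserves $V_1$ and $V_2$, hence so does $a_\sigma:=T(\sigma)\circ B_b(\sigma)^{-1}\in\PGL(V)$ (Definition~\ref{Galois action}); equivalently any lift $c_\sigma\in\GL(V)$ of $a_\sigma$ lies in $\GL(V_1)\times\GL(V_2)$, which is the assertion.

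I expect the main obstacle to be precisely the middle step: extracting usable geometry from the bare commutation relation $h\,T(\sigma)=T(\sigma)\,h$. The point is that the useful consequence is not about $h$ alone but about how the semilinear maps $t_\sigma$ (together with $\Gal(\overline K/L)$) move the eigenvalue pattern $S\subseteq\mu_N$, and that the decisive — but elementary — input is the counting fact that if the stabiliser of $S$ were as large as $|S|$ then $h$ could have order at most $\dim V$. The hypothesis that $K$ contain all roots of unity enters exactly in forcing every eigenvalue ratio into $\mu_N\subseteq K$, which is what makes the Galois action and the $\PGL(V)$-action compatible in the way needed; admissibility is used to get $\Aut_L(\Fl(\mathbf d,V))=\PGL(V)$ and with it the semilinear lifts $t_\sigma$.
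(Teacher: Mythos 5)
Your proof is correct, and it follows the same skeleton as the paper's: take $g\in\Aut_K(X)$ of large finite order (supplied by unboundedness), view it inside $\PGL(V)=\Aut_L(\Fl(\mathbf{d},V))$ via admissibility, use its commutation with the twisted Galois action, and read the splitting of $V$ off an eigenspace decomposition of a lift. Where you diverge is in how the splitting is extracted from the commutation relation. The paper picks $g$ of order greater than $n!$ (with $n=\dim V$), normalizes the lift $h$ to have the same finite order as $g$ -- so that its eigenvalues are roots of unity lying in $K\leqq L$ and the eigenbasis $b$ is defined over $L$ -- derives $hc_\sigma=\nu_\sigma c_\sigma h$ with $\nu_\sigma$ a root of unity of order at most $n$, and then simply replaces $h$ by $h^{n!}$, which commutes with every $c_\sigma$ and with $A_b(\sigma)$ on the nose; $V_1$ and $V_2$ are built from the eigenspaces of $h^{n!}$, nontrivial because $g^{n!}\neq 1$. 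You instead take an arbitrary lift $\hat h$, pass to $\overline{K}$, and group the eigenspaces by cosets of the subgroup $F$ of common shifts $\delta_\rho,\epsilon_\sigma$; your counting argument (a single coset would force $N=|F|=k\leqq n$) plays the role of the paper's passage to the $n!$-th power. Your route needs only an element of order greater than $n$ rather than $n!$, and it avoids having to produce a lift of $g$ of the same finite order over $L$ -- a normalization the paper justifies by appeal to the roots of unity in $K$, and which your argument shows is not actually needed -- at the price of an excursion to $\overline{K}$ and an extra descent step for the coset grouping (both of which you handle correctly). The paper's route is shorter and stays over $L$ throughout. Both arguments invoke admissibility and the roots-of-unity hypothesis at the same points, and both are valid.
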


\begin{proof}
The isomorphism $X\times\Spec L\cong\Fl(\mathbf{d},V)$ induces an isomorphism $\Aut_L(X\times\Spec L)\cong\Aut_L(\Fl(\mathbf{d},V))=\PGL(V)$. Let $n=\dim V$, and fix a finite order element $g\in\Aut_K(X)$ with order larger than $n!$. 
It exists by the previous theorem. $g$ can be viewed as an element in $\PGL(V)$ since $\Aut_K(X)\leqq \Aut_L(X\times\Spec L)$.\\
 Let $h$ be a fixed lift of $g$ to $\GL(V)$ such that the order of $h$ is equal to the order of $g$, it exists since $K$ contains all roots of unity. 
 Notice that $h$ is of finite order, hence it is semisimple (since we are in characteristic 0). Let $b$ be a basis of $V$ consisting of eigenvectors of $h$. Again, this basis exists as $K$ contains all roots of unity and $h$ is of finite order
 (hence its eigenvalues are roots of unity).\\
 Let $V\cong V_{\lambda_1}\oplus V_{\lambda_2}\oplus...\oplus V_{\lambda_r}$ be the direct sum decomposition corresponding to the eigenspaces of $h^{n!}$. Since $g^{n!}\neq 1$, 
the linear transformation $h^{n!}$ cannot be a scalar multiply of the identity, therefore it has at least two distinct eigenspaces, i.e. $r\geqq 2$.
Let $V_1=V_{\lambda_1}$ and $V_2=V_{\lambda_2}\oplus...\oplus V_{\lambda_r}$. The basis $b$ splits as $b_1\cup b_2$, where $b_i$ is a basis of $V_i$. Indeed, an eigenspace of $h^{n!}$ is a direct sum of the eigenspaces of $h$.\\
Moreover, since $h$ is chosen to be of finite order: $h\circ A_b(\sigma)=A_b(\sigma)\circ h$, as $K$ contains all roots of unity by assumption.\\
The action of $g$ on $X\times \Spec L$ commutes with the natural Galois action. Indeed the action of $g$ derives from a group action on $X$, while the natural Galois action derives form a group action on $\Spec L$. 
Using the isomorphism $\Aut_L(X\times \Spec L)\cong \PGL(V)$, this leads us to $g\circ (a_\sigma\circ B_b(\sigma))=(a_\sigma\circ B_b(\sigma))\circ g$. 
Since $h$ and $A_b(\sigma)$ commutes, the same holds for $g$ and $B_b(\sigma)$, hence $g\circ a_\sigma=a_\sigma\circ g\in  \PGL(V)$.\\ 
Lift this equation to $\GL(V)$:
$hc_\sigma=\nu_\sigma c_\sigma h$, where $c_\sigma$ is an arbitrary lift of $a_\sigma$, and $\nu_\sigma\in L$ only depends on $\sigma$ (as we keep $h$ fixed throughout our argument).\\
Let $v_1,v_2,...,v_n\in V$ be a basis consisting of eigenvectors of $h$,  i.e. $hv_i=\mu_i v_i$ ($\mu_i\in L$; $i=1,....,n$). Consider the basis $c_\sigma v_1,c_\sigma v_2,..., c_\sigma v_n$, it is also a basis consisting of eigenvectors of $h$.
Indeed, $h(c_\sigma v_i)=\nu_\sigma c_\sigma hv_i=\nu_\sigma\mu_i (c_\sigma v_i)$. Since the eigenvalues of $h$ are uniquely determined, multiplication with $\nu_\sigma$ must permute them. Therefore $\nu_\sigma$ is a root of unity, with order less than or equal to $n$ ($\forall \sigma\in\Gal(L|K)$). Hence $h^{n!}c_\sigma=c_\sigma h^{n!}$. Therefore $c_\sigma\in \GL(V_1)\times \GL(V_2)<\GL(V)$ ($\forall \sigma\in\Gal(L|K)$). 
\end{proof}

\begin{rem}
A similar, but much more technical, statement can be formulated including the case of non-admissible flag varieties. Since we will not use it, we decided only to state the simpler version which applies to admissible flags.\\
\end{rem}

\section{Proof of the Main Theorem}
\label{Proof}
The strategy for the proof is the following. Instead of working with $X$, we will consider a flag variety equipped with a twisted Galois action. 
Using the splitting established in Lemma \ref{splitting} and the constructions introduced in Section \ref{rational maps}, we will build a Galois equivariant morphism from the flag variety to a lower dimensional variety, 
which is isomorphic to the Galois equivariant projection morphism of a vector bundle. Finally, by the use of Galois descent, we achieve the desired result.\\

If $\Aut_K(X)$ is bounded, then the claim of the main theorem (Theorem \ref{MainThm}) holds, so in the followings we assume otherwise.
\subsection{Setup of the proof of Theorem \ref{MainThm}}
\subsubsection{Notation}
Let $K$ be a field of characteristic $0$, containing all roots of unity. Let the  $K$-variety $X$ be a form of an admissible flag variety.\\ 
We will use the notations of $\phi_1, \phi_2$, $\psi$ and $U_1$, $U_2$, $U$  introduced in Section \ref{rational maps} (see also Remark \ref{L-vb}).\\
Let $L$ be a splitting field for $X$ such that $L|K$ is a finite Galois extension (see Definition \ref{SF}). 
Let $X\times \Spec L\cong \Fl(\mathbf{d_0},V)$  (where $V$ is an $L$-vector space), and let $T:\Gal(L|K)\to\Aut_K(\Fl(\mathbf{d_0},V))$ be the corresponding Galois action
(see Definition \ref{NatGalAct}).
Let $b$ be the basis of $V$ established in Lemma \ref{splitting}, $b=b_1\cup b_2$ and $V=V_1\oplus V_2$ be the corresponding decompositions.
By enlarging $L$ if necessary, we can assume that  $\phi_1, \phi_2$, $\psi$  and $U_1$, $U_2$, $U$ are defined over $L$ (see Remark \ref{L-vb}), Proposition \ref{vb} holds over $L$, 
moreover we can require that the sets of $L$-rational points in the complements of $U_1$, $U_2$ and $U$ are dense (see Remark \ref{L-vb}).
Let 
\begin{gather*}
A=A_b:\Gal(L|K)\to\Lin_K(V),\\  
B=B_b:\Gal(L|K)\to\Aut_K(\Fl(\mathbf{d_0},V)) 
\end{gather*}
be the corresponding twisted Galois actions (see Definition \ref{Galois action}). 
Finally, let $n=\dim V$ and $n_i=\dim V_i$ ($i=1,2$).\\
There are three different cases depending on the sequence $\mathbf{d_0}=(d_{0,1}<d_{0,2}<...<d_{0,r})$ and on $\dim V_1=n_1$.  Case 1: $d_{0,r}\leqq n_1$, Case 2: $n_1<d_{0,1}$ and Case 3: $d_{0,1}\leqq n_1<d_{0,r}$.
All of them should be handled similarly. \\

In the followings we will explicitly deal with Case 3. This contains all the necessary techniques and calculations involved in Case 1 and Case 2. At the end of each step we remark some of the necessary changes to deal with the other cases.

\subsubsection{Construction of the Galois actions on the target spaces}
\label{actions}
Let's assume Case 3. We will investigate $\psi$, at the end of the subsection we will note the changes for the other two cases.
Split $\mathbf{d_0}$ as $\mathbf{d}=(d_1<...<d_p)$ and $\mathbf{e}=(e_1<...<e_q)$, where $d_p\leqq n_1<e_1$ and 
$\mathbf{d_0}=(d_1<...<d_p<e_1...<e_q)$. The basis $b_1$ and $b_2$ induce the  following actions. 
\begin{gather*}
A_1=A_{b_1}:\Gal(L|K)\to\Lin_K(V_1)\\ 
B_1=B_{b_1}:\Gal(L|K)\to\Aut_K(\Fl(\mathbf{d},V_1))\\ 
A_2=A_{b_2}:\Gal(L|K)\to\Lin_K(V_2)\\
B_2=B_{b_2}:\Gal(L|K)\to\Aut_K(\Fl(\mathbf{e}-n_1,V_2))
\end{gather*}
By Lemma \ref{splitting} $\forall \sigma\in\Gal(L|K)$: $T(\sigma)=a_\sigma\circ B_b(\sigma)$ ($a_\sigma\in\PGL(V)$), 
and an arbitrary lift of $a_\sigma$, denoted by $c_\sigma$, splits, i.e. $c_\sigma\in \GL(V_1)\times\GL(V_2)$. For every $\sigma\in \Gal(L|K)$ fix a lift $c_\sigma$, 
and let $c_{\sigma,1}\in\GL(V_1)$ and $c_{\sigma,2}\in\GL(V_2)$ be its components. Let $a_{\sigma,1}\in\PGL(V_1)$ and $a_{\sigma,2}\in\PGL(V_2)$ be the images of $c_{\sigma,1}$
and $c_{\sigma,2}$ respectively.
Since all steps in our construction was compatible with the decomposition $V=V_1\oplus V_2$, 
\begin{gather*}
Q_1:\Gal(L|K)\to \Aut_K(\Fl(\mathbf{d}, V_1))\\
\sigma\mapsto a_{\sigma,1}\circ B_1(\sigma),\\
Q_2:\Gal(L|K)\to \Aut_K(\Fl(\mathbf{e}-n_1, V_2))\\
\sigma\mapsto a_{\sigma,2}\circ B_2(\sigma)
\end{gather*}
define twisted Galois actions for $\Fl(\mathbf{d}, V_1)$ and $\Fl(\mathbf{e}-n_1, V_2)$ respectively. Putting them together 
\begin{gather*}
Q:\Gal(L|K)\to \Aut_K(\Fl(\mathbf{d},V_1)\times \Fl(\mathbf{e}-n_1,V_2))\\
\sigma\mapsto (a_{\sigma,1}\circ B_1(\sigma))\times (a_{\sigma,2}\circ B_2(\sigma))
\end{gather*}
defines a twisted Galois action on $\Fl(\mathbf{d},V_1)\times \Fl(\mathbf{e}-n_1,V_2)$.\\
In Case 1 and Case 2 we do not need to introduce the notations $\mathbf{d}$ and $\mathbf{e}$. In Case 1 we need to consider a $Q_1$-like action on $\Fl(\mathbf{d_0}, V_1)$ (we denote it by $R_1$), 
while in Case 2 we need to consider a $Q_2$-like action on $\Fl(\mathbf{d_0}-n_1, V_2)$ (we denote it by $R_2$).

\subsection{Steps of the proof of Theorem \ref{MainThm}}
\subsubsection{Galois equivariance of the rational maps $\phi_1, \phi_2$ and $\psi$ }
To show the Galois equivariance of $\phi_1, \phi_2$ and $\psi$  we need to check two things, the invariance of the open subvariety where the rational maps are defined ($U_1$, $U_2$ and $U$, respectively),
 and the equivariance of the corresponding morphisms from the open subvarieties to the target spaces.
\begin{lem}
The open subvarieties $U_1$, $U_2$ and $U$ are invariant under the Galois actions, i.e. $\forall\sigma\in\Gal(L|K)$ $T(\sigma)U_i=U_i$ $(i=1,2)$ and  $T(\sigma)U=U$.
\end{lem}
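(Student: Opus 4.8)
The claim has two halves: the geometric description of the subvarieties $U_1$, $U_2$, $U$ (which is intrinsic, phrased via ranks of the natural maps $\rho$, $\tau$ on tautological bundles), and the fact that the twisted Galois action $T$ preserves each of them. I would handle the invariance by translating $T(\sigma)$ into its linear incarnation. By Lemma \ref{splitting} we have $T(\sigma) = a_\sigma \circ B_b(\sigma)$, where $a_\sigma \in \PGL(V)$ lifts to $c_\sigma = (c_{\sigma,1}, c_{\sigma,2}) \in \GL(V_1)\times\GL(V_2) < \GL(V)$, and where $B_b(\sigma)$ is the automorphism of the flag variety induced by the semilinear map $A_b(\sigma)$ from Definition \ref{Galois action}. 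Since the flag variety is admissible, Remark \ref{NaturalAction2} gives the key formula: $T(\sigma)$ acts on the underlying linear subspaces via the (semilinear) map $c_\sigma \circ A_b(\sigma) : V \to V$, compatibly with its action on flags.

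First I would record the structural property of the semilinear map $g_\sigma := c_\sigma \circ A_b(\sigma)$: because both $c_\sigma$ and $A_b(\sigma)$ respect the decomposition $V = V_1 \oplus V_2$ (the former by Lemma \ref{splitting}, the latter because $b = b_1 \cup b_2$ is adapted to it), the map $g_\sigma$ also respects it, i.e. $g_\sigma(V_i) = V_i$ for $i = 1,2$. This is the only place the splitting lemma is really used.

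Next, for $U_1$: by definition a flag $Z_1 < \dots < Z_p$ lies in $U_1$ iff $Z_p \cap V_2 = \{0\}$. Applying $T(\sigma)$ sends this flag to the flag with top space $g_\sigma(Z_p)$. Since $g_\sigma$ is a bijection of $V$ carrying $V_2$ onto $V_2$, we get $g_\sigma(Z_p) \cap V_2 = g_\sigma(Z_p) \cap g_\sigma(V_2) = g_\sigma(Z_p \cap V_2) = g_\sigma(\{0\}) = \{0\}$, so the image lies in $U_1$ again; applying the same argument to $\sigma^{-1}$ gives the reverse inclusion, hence $T(\sigma)U_1 = U_1$. The argument for $U_2$ is identical with the condition $W_1 + V_2 = V$ in place of $Z_p \cap V_2 = \{0\}$: $g_\sigma(W_1) + V_2 = g_\sigma(W_1) + g_\sigma(V_2) = g_\sigma(W_1 + V_2) = g_\sigma(V) = V$. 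And $U = U_1' \cap U_2'$ (the intersection of the preimages of the conditions on $Z_p$ and on $W_1$ under the two forgetful maps from $\Fl(\mathbf{d}<\mathbf{e},V)$), so invariance of $U$ follows formally from the two previous cases, since the forgetful maps are $\PGL(V)$-equivariant and $B_b(\sigma)$-equivariant, hence $T(\sigma)$-equivariant.

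**Main obstacle.** The only genuinely delicate point is that $T(\sigma)$ does not act $L$-linearly — it is a twisted (semilinear) automorphism — so one must be careful that the set-theoretic description of its action on linear subspaces is governed by the semilinear map $g_\sigma = c_\sigma \circ A_b(\sigma)$ rather than by anything $L$-linear, and that $g_\sigma$ is still an additive bijection of $V$ preserving each $V_i$ (additive bijectivity is all one needs, since $Z_p \cap V_2 = \{0\}$ and $W_1 + V_2 = V$ are statements about additive subgroups). Once Remark \ref{NaturalAction2} is invoked to get the formula for the action on subspaces, everything else is the elementary observation that a bijection respecting $V = V_1 \oplus V_2$ commutes with the operations $(\,\cdot\,) \cap V_2$ and $(\,\cdot\,) + V_2$. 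One should also note, for the non-admissible case excluded here, that the "dimension-swapping" phenomenon of Remark \ref{NaturalAction2} would obstruct exactly this step — which is why admissibility is assumed.
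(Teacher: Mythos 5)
Your core computation is the same as the paper's: identify $T(\sigma)$ on flags with the semilinear bijection $c_\sigma\circ A_b(\sigma)$, observe that it preserves $V_1$, $V_2$ and $V$, and conclude that it commutes with the operations $(\,\cdot\,)\cap V_2$ and $(\,\cdot\,)+V_2$, so a flag satisfies the defining conditions of $U_1$, $U_2$, $U$ if and only if its image does. That part is correct and is exactly the first half of the paper's argument.

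However, there is a gap at the very end. What your argument establishes is that $U$ and $T(\sigma)U$ contain the same \emph{$L$-rational points}; the lemma asserts equality of \emph{open subvarieties} of the $L$-scheme $\Fl(\mathbf{d}<\mathbf{e},V)$. Since $L$ is a finite extension of $K$ and need not be algebraically closed, two closed (or open) subschemes of an $L$-variety are not in general determined by their $L$-points, so "same $L$-points" does not immediately give "same open subscheme." The paper closes this gap with a density argument: the complements $A_1$, $A_2$, $A$ are unions of Schubert cells, and $L$ was enlarged in Remark \ref{L-vb} precisely so that these cell decompositions are defined over $L$; since Schubert cells are affine spaces, the $L$-points are dense in the complements, and two Zariski-closed sets with dense, coinciding sets of $L$-points must be equal. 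Your write-up never invokes this (or any substitute, such as checking that $T(\sigma)$ preserves the ideal sheaf of the degeneracy locus of the section $\rho$, which you mention in your plan but do not use), so as stated the final step "hence $T(\sigma)U=U$" is not justified. Everything else is fine.
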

\begin{proof}
We consider the case of $U$, the proof for the other two cases are almost verbatim.\\
Let $\sigma\in\Gal(L|K)$ be an arbitrary element of the Galois group. First notice that $L$-rational points of a flag variety can be identified with the flags of the underlying vector space.\\ 
We will show that an $L$-rational point (i.e. a flag) belongs to $U$ if and only if it belongs to $T(\sigma)U$.
Notice that the $L$-rational points of the open subvariety $U\subset \Fl(\mathbf{d}<\mathbf{e},V)$  are  given by
\[\{Z_1<...<Z_p<W_1<...<W_q\vert Z_p\cap V_2=\{0\}, W_1+ V_2=V\}.\] 
$V_1$, $V_2$ and $V$ are invariant under the natural actions of $c_\sigma$ and $A(\sigma)$ by construction.
Hence 
\begin{multline*}
T(\sigma)(Z_1<...<Z_p<W_1<...<W_q)=\\
(c_\sigma\circ A(\sigma))(Z_1)<...<(c_\sigma\circ A(\sigma))(Z_p)<\\
(c_\sigma\circ A(\sigma))(W_1)<...<(c_\sigma\circ A(\sigma))(W_q)
\end{multline*}
satisfies the defining equation of the $L$-rational points of $U$ if and only if $Z_1<...<Z_p<W_1<...<W_q\in U$.\\
Consider the complements of $U$ and $T(\sigma)U$ as topological subspaces in the underlying topological space of  $\Fl(\mathbf{d}<\mathbf{e},V)$. They are homeomorphic Zariski closed sets, 
moreover they contain exactly the same set of $L$-rational points. We have chosen the field $L$ in such a way that the $L$-rational points in the complement of $U$ forms a dense set. 
Putting these together implies that complement of $U$ and $T(\sigma)U$ are equal. Hence $T(\sigma)U=U$ as open subvarieties.
\end{proof}

Recall the definitions of the Galois actions $R_1$, $R_2$ and $Q$ from Section \ref{actions}.
\begin{lem}
\leavevmode
\begin{enumerate}
\item The morphism $\phi_1: U_1\to\Fl(\mathbf{d_0},V_1)$ is equivariant for the twisted Galois actions $T$ and $R_1$.  
\item The morphism $\phi_2: U_2\to\Fl(\mathbf{d_0}-n_1,V_2)$ is equivariant for the twisted Galois actions $T$ and $R_2$. 
\item The morphism $\psi: U\to\Fl(\mathbf{d},V_1)\times \Fl(\mathbf{e}-n_1,V_2)$ is equivariant for the twisted Galois actions $T$ and $Q$. 
\end{enumerate}

\end{lem}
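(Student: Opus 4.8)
The plan is to verify the three equivariance statements simultaneously, since $\psi$ is built out of $\phi_1$ and $\phi_2$ componentwise and all three arguments are the same; I will spell out the case of $\psi$ (Case 3) and indicate that $\phi_1$ (Case 1) and $\phi_2$ (Case 2) follow verbatim. Since $U$, $\Fl(\mathbf{d},V_1)\times\Fl(\mathbf{e}-n_1,V_2)$ and the morphism $\psi$ are all defined over the finite Galois extension $L$, and a morphism of $L$-varieties is determined by its effect on $L$-rational points (the $L$-points of a flag variety being Zariski dense, as flag varieties are split over $L$), it suffices to check the identity $\psi\circ T(\sigma)=Q(\sigma)\circ\psi$ on $L$-rational points, i.e. on actual flags of $V$. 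So fix $\sigma\in\Gal(L|K)$ and a flag $Z_1<\dots<Z_p<W_1<\dots<W_q\in U(L)$.

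First I would record the formula for $T(\sigma)$ on flags. By Lemma \ref{splitting}, $T(\sigma)=a_\sigma\circ B_b(\sigma)$ with a lift $c_\sigma=(c_{\sigma,1},c_{\sigma,2})\in\GL(V_1)\times\GL(V_2)$, so by Remark \ref{NaturalAction2} the action on a flag is
\[
T(\sigma)(Z_1<\dots<W_q)=(c_\sigma\circ A(\sigma))(Z_1)<\dots<(c_\sigma\circ A(\sigma))(W_q),
\]
and $c_\sigma\circ A(\sigma)$ preserves the decomposition $V=V_1\oplus V_2$ because $c_\sigma$ does by construction and $A(\sigma)=A_b(\sigma)$ does since $b=b_1\cup b_2$. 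Consequently $c_\sigma\circ A(\sigma)$ commutes with the projection $pr\colon V\to V_1$ along $V_2$ and carries $V_2$ to itself, restricting to $c_{\sigma,1}\circ A_1(\sigma)$ on $V_1$ and to $c_{\sigma,2}\circ A_2(\sigma)$ on $V_2$. The key algebraic point is then the pair of identities
\[
pr\bigl((c_\sigma\circ A(\sigma))(Z_i)\bigr)=(c_{\sigma,1}\circ A_1(\sigma))(pr(Z_i)),\qquad
(c_\sigma\circ A(\sigma))(W_j)\cap V_2=(c_{\sigma,2}\circ A_2(\sigma))(W_j\cap V_2),
\]
the first because $pr$ intertwines $c_\sigma\circ A(\sigma)$ with its restriction to $V_1$, the second because an invertible linear map preserving $V_2$ commutes with the operation $-\cap V_2$. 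Plugging these into the definitions of $\psi$, $\phi_1$, $\phi_2$ and of the actions $Q_1(\sigma)=a_{\sigma,1}\circ B_1(\sigma)$, $Q_2(\sigma)=a_{\sigma,2}\circ B_2(\sigma)$ (again using Remark \ref{NaturalAction2} to read these off on flags of $V_1$, resp.\ $V_2$) gives exactly $\psi(T(\sigma)(\cdot))=Q(\sigma)(\psi(\cdot))$.

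The only genuine subtlety — and the step I expect to cost the most care — is bookkeeping the passage between $\GL$ and $\PGL$: the lift $c_\sigma$ is only defined up to a scalar, so one must check that replacing $c_\sigma$ by $\lambda c_\sigma$ changes neither $a_\sigma$ nor $a_{\sigma,i}$ in $\PGL$, hence changes neither side of the asserted identity; since scalars act trivially on linear subspaces this is immediate, but it is worth stating so that the "componentwise lift" language of Section \ref{actions} is legitimate. Everything else is the formal fact that $pr$ and $-\cap V_2$ are natural with respect to linear maps respecting the direct sum decomposition. For Case 1 one uses only the first identity (with $\mathbf{d_0}$ in place of $\mathbf{d}$, $V_1$ in place of $V_1$, and $R_1$ in place of $Q_1$); for Case 2 one uses only the second; in both cases the proof is the same with one of the two factors suppressed. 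This completes the argument.
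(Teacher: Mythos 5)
Your proposal is correct and follows essentially the same route as the paper: reduce to $L$-rational points by density, write $T(\sigma)$ on flags as $c_\sigma\circ A(\sigma)$ applied to each subspace, and verify the two intertwining identities for $pr$ and $-\cap V_2$ using that $c_\sigma\circ A(\sigma)$ respects the decomposition $V=V_1\oplus V_2$. The extra remark on independence of the choice of lift $c_\sigma$ is a harmless (and reasonable) addition not spelled out in the paper.
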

\begin{proof}
First notice that $L$-flag varieties can be covered by affine spaces $\mathbb{A}^m_L$, where $m$ is the appropriate dimension. Therefore  the $L$-rational points form a dense set (as the same holds for $\mathbb{A}^m_L$). 
Hence, to show that two morphisms whose domains and target spaces are built up from open subvarieties of $L$-flag varieties are equal, it is enough to show that they are equal on $L$-rational points, which can be identified with flags. 
(Also note that checking Galois equivariance is equivalent to checking equality of morphisms.)\\
From now on we will deal with the case of $\psi$ and note the necessary changes at the end of the proof for the other two cases.
The twisted Galois action $T$ on the $L$-rational points (i.e. on the flags) is given by the formula
\begin{multline*}
T(\sigma)(Z_1<...<Z_p<W_1<...<W_q)=\\
(c_\sigma\circ A(\sigma))(Z_1)<...<(c_\sigma\circ A(\sigma))(Z_p)<\\
(c_\sigma\circ A(\sigma))(W_1)<...<(c_\sigma\circ A(\sigma))(W_q)
\end{multline*}
where $Z_1<...<Z_p<W_1<...<W_q$ is an arbitrary flag of the open subvariety $U$ and $\sigma\in\Gal(L|K)$ is an arbitrary element of the Galois group. While the twisted Galois action $Q$ is given by the formula
\begin{multline*}
Q(\sigma)(S_1<...<S_p,T_1<...<T_q)=\\
((c_{\sigma,1}\circ A_1(\sigma))(S_1)<...<(c_{\sigma,1}\circ A_1(\sigma))(S_p),\\
(c_{\sigma,2}\circ A_2(\sigma))(T_1)<...<(c_{\sigma,2}\circ A_2(\sigma))(T_q))
\end{multline*}
where $(S_1<...<S_p,T_1<...<T_q)$ is an arbitrary $L$-rational point of the product variety $\Fl(\mathbf{d},V_1)\times \Fl(\mathbf{e}-n_1,V_2)$ and $\sigma\in\Gal(L|K)$ is an arbitrary element of the Galois group. 
Comparing these equations with the definition of $\psi$  shows that for verifying the Galois equivariance of $\psi$ it is enough to check that the followings hold.
\begin{gather*}
pr\circ(c_\sigma\circ A(\sigma))(Z)=(c_{\sigma,1}\circ A_1(\sigma))\circ pr(Z)\\
(c_\sigma\circ A(\sigma))(W)\cap V_2=(c_{\sigma,2}\circ A_2(\sigma))(W\cap V_2),
\end{gather*} 
where $Z$ and $W$ are arbitrary linear subspaces of $V$ and $pr: V \to V_1$ is the projection along $V_2$.\\
For the first equation, consider an arbitrary vector $v\in V$. It can be written as $v=v_1+v_2$ where $v_i\in V_i$ ($i=1,2$).
\begin{gather*}
(pr\circ c_\sigma\circ A(\sigma))(v)=(c_{\sigma,1}\circ A_1(\sigma))(v_1)=(c_{\sigma,1}\circ A_1(\sigma)\circ pr)(v)
\end{gather*}
Hence the first equation is satisfied. For the second one, let $W\leqq V$ be an arbitrary linear subspace.
\begin{multline*}
(c_\sigma\circ A(\sigma))(W)\cap V_2=(c_\sigma\circ A(\sigma))(W)\cap (c_\sigma\circ A(\sigma))(V_2)=\\
(c_\sigma\circ A(\sigma))(W\cap V_2)=(c_{\sigma,2}\circ A_2(\sigma))(W\cap V_2),
\end{multline*}
where we used that $V_2$ is invariant under $c_\sigma\circ A(\sigma)$ and that $c_\sigma\circ A(\sigma)$ is a bijection from $V$ to $V$. Hence the second equation is satisfied too, which shows that $\psi$ is Galois equivariant.\\
For the case of $\phi_1$ we need to perform the steps corresponding to the $Z_1<...<Z_p$-part of the above argument,
meanwhile for the case of $\phi_2$ we need to perform the steps corresponding to the $W_1<...<W_q$-part.
\end{proof}

\subsubsection {Galois equivariance of the vector bundle structure}
\begin{lem}
\label{ConstrG-vb}
The vector bundles $(U_1,\Fl(\mathbf{d_0},V_1),\phi_1)$, $(U_2,\Fl(\mathbf{d_0}-n_1,V_2),\phi_2)$ and $(U,\Fl(\mathbf{d},V_1)\times\Fl(\mathbf{e}-n_1,V_2),\psi)$ are Galois equivariant.
In other words, the Galois actions respect the addition and twist (by the corresponding element of the Galois group) the multiplication by scalar operations.
\end{lem}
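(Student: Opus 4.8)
The plan is to argue fibrewise, relying on the explicit descriptions of the fibres of $\phi_1$, $\phi_2$ and $\psi$ that were produced in the proof of Proposition~\ref{vb}, and on the Galois equivariance of these maps established just above. As in the previous lemmas I would carry out the details for $\psi$; the statements for $\phi_1$ and $\phi_2$ are the two halves of that argument.

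First I would recall that over a point $(S_1<\dots<S_p,\,T_1<\dots<T_q)$ of the base the fibre of $\psi$ is the linear subspace $E=\Ker F\subseteq \Hom(S_p,V_2)\times\Hom(V_1,V_2/T_1)$, a pair $(f,g)$ corresponding to the flag whose $d_p$-dimensional member is $Z_p=\{v+f(v):v\in S_p\}$ and whose $e_1$-dimensional member is $W_1=u^{-1}(\{u(v)+g(v):v\in V_1\})$, with $u:V\to V/T_1$ the quotient map. Since $\psi$ is Galois equivariant, $T(\sigma)$ maps this fibre onto the fibre of $\psi$ over $Q(\sigma)(S_\bullet,T_\bullet)$, and I must show this map is $\sigma$-semilinear in the above coordinates. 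Writing $h_\sigma=c_\sigma\circ A(\sigma)$, which preserves the splitting $V=V_1\oplus V_2$ and restricts on $V_i$ to $h_{\sigma,i}=c_{\sigma,i}\circ A_i(\sigma)$ --- an additive, $\sigma$-semilinear map, since $A_i(\sigma)$ is $\sigma$-semilinear and $c_{\sigma,i}$ is $L$-linear --- I would compute directly from the graph formulas for $Z_p$ and $W_1$ that $T(\sigma)$ carries $(f,g)$ to
\[
\bigl(\,h_{\sigma,2}\circ f\circ h_{\sigma,1}^{-1},\ \ \overline{h_{\sigma,2}}\circ g\circ h_{\sigma,1}^{-1}\,\bigr),
\]
where $\overline{h_{\sigma,2}}\colon V_2/T_1\to V_2/h_{\sigma,2}(T_1)$ is the map induced by $h_{\sigma,2}$; here one uses only that $h_\sigma V_2=V_2$ (so $h_\sigma$ descends to the relevant quotients) and that $h_\sigma$ is bijective.

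From this transport formula everything follows. Each component is additive, because $h_{\sigma,1}^{-1}$, $h_{\sigma,2}$ and $\overline{h_{\sigma,2}}$ are additive; and it sends $\lambda\cdot(-)$ to $\sigma(\lambda)\cdot(-)$ for $\lambda\in L$, because $h_{\sigma,2}$ and $\overline{h_{\sigma,2}}$ are $\sigma$-semilinear while pre-composition with $h_{\sigma,1}^{-1}$ leaves scalars alone. Hence the induced map on $\Hom(S_p,V_2)\times\Hom(V_1,V_2/T_1)$ is $\sigma$-semilinear, and so is its restriction to $E$ --- which is indeed respected, the incidence condition $Z_p\leqq W_1$ being geometric and therefore stable under $T(\sigma)$. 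Finally I would note that the addition and scalar-multiplication morphisms of the bundle are morphisms of $L$-schemes, so the asserted identities may be verified on the dense set of $L$-rational points of the fibres, which is exactly what the computation above does; this globalizes the fibrewise statement. The cases of $\phi_1$ and $\phi_2$ are obtained by keeping only the $Z$-flag part, resp.\ the $W$-flag part, of this argument. The only real obstacle is bookkeeping: composing $c_\sigma\circ A(\sigma)$ with the graph descriptions of $Z_p$ and $W_1$, keeping track of which maps descend to quotients, and making sure the net effect on a fibre element is $\sigma$-semilinear (coming from $A_i(\sigma)$) rather than $L$-linear (which would be the case if only the $c_{\sigma,i}$ intervened).
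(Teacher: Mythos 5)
Your proposal is correct and follows essentially the same route as the paper: reduce to $L$-rational points, use the graph descriptions of $Z_p$ and $W_1$ from Proposition~\ref{vb} to compute that $T(\sigma)$ transports $(f,g)$ to $\bigl((c_{\sigma,2}\circ A_2(\sigma))\circ f\circ (c_{\sigma,1}\circ A_1(\sigma))^{-1},\ \overline{(c_{\sigma,2}\circ A_2(\sigma))}\circ g\circ (c_{\sigma,1}\circ A_1(\sigma))^{-1}\bigr)$, and read off additivity and $\sigma$-semilinearity. The only detail the paper adds that you omit is the observation that this formula is independent of the choice of lift $c_\sigma$, since both components come from the same lift.
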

\begin{proof}
Again using the fact that $L$-rational points of $L$-flag varieties form a dense set, it is enough to check that the vector bundle structure is respected on the $L$-rational points, i.e. on the flags.
As usual we assume the case of $(U,\Fl(\mathbf{d},V_1)\times\Fl(\mathbf{e}-n_1,V_2),\psi)$  and note the necessary changes for the other two cases at the end of the proof.\\
Let $Z_1<...<Z_p<W_1<...<W_q\in U$ be an arbitrary flag lying over $(S_1<...<S_p, T_1<...<T_q)\in\Fl(\mathbf{d}, V_1)\times\Fl(\mathbf{e}-n_1, V_2)$. As we have seen before, 
its image under $T(\sigma)$ ($\sigma\in\Gal(L|K)$) is the flag 
\begin{multline*}
(c_\sigma\circ A(\sigma))(Z_1)<...<(c_\sigma\circ A(\sigma))(Z_p)<\\
(c_\sigma\circ A(\sigma))(W_1)<...<(c_\sigma\circ A(\sigma))(W_q)\in U 
\end{multline*}
which lies over 
\begin{multline*}
((c_{\sigma,1}\circ A_1(\sigma))(S_1)<...<(c_{\sigma,1}\circ A_1(\sigma))(S_p),\\
(c_{\sigma,2}\circ A_2(\sigma))(T_1)<...<(c_{\sigma,2}\circ A_2(\sigma))(T_q))\in\Fl(\mathbf{d}, V_1)\times\Fl(\mathbf{e}-n_1, V_2).
\end{multline*}
Using these formulas and the equations \eqref{S}, \eqref{W'} and \eqref{W} which construct the flag $Z_1<...<Z_p<W_1<...<W_q\in U$ from $(f,g)\in E<\Hom(S_p,V_2)\times \Hom(V_1,V_2/T_1)$ 
(and the corresponding equations for the image of the flag),
we can see that the image of the flag corresponds to 
\begin{gather*}
(c_{\sigma,2}\circ A_2(\sigma))\circ f \circ ((c_{\sigma,1}\circ A_1(\sigma))^{-1}\in\Hom((c_{\sigma,1}\circ A_1(\sigma))(S_p),V_2) \\
\overline{(c_{\sigma,2}\circ A_2(\sigma))}\circ g \circ ((c_{\sigma,1}\circ A_1(\sigma))^{-1}\in\Hom(V_1,V_2/(c_{\sigma,2}\circ A_2(\sigma))(T_1)),
\end{gather*}
where 
\[\overline{(c_{\sigma,2}\circ A_2(\sigma))}:V_2/T_1\to V_2/(c_{\sigma,2}\circ A_2(\sigma))(T_1)\] 
is the $\sigma$-linear homomorphism induced by $c_{\sigma,2}\circ A_2(\sigma): V_2 \to V_2$. Therefore we have a Galois action on the vector bundle structure which respects the addition and twists the multiplication by scalar operations. 
(Observe that the formula of the action does not depend on the choice of the lift $c_\sigma$ as both $c_{\sigma,1}$ and $c_{\sigma,2}$ derives from the same lift.)\\ 
Again for the other two cases we only need to carry out half of the proof. For the case of $(U_1,\Fl(\mathbf{d_0},V_1),\phi_1)$ we need the part which corresponds to $Z_1<...<Z_p$ and $f$, 
for the case of $(U_2,\Fl(\mathbf{d_0}-n_1,V_2),\phi_2)$ we need the other half which corresponds to $W_1<...<W_q$ and $g$.
\end{proof}

\subsubsection {Galois descent}
We can finish our proof by the help of Galois descent. 

\begin{proof}[Proof of Theorem \ref{MainThm}]
For Case 3 we can summarize the results of the previous lemmas in the following way. We constructed a Galois equivariant commutative diagram of $L$-varieties, where $\psi$ is a Galois equivariant projection of a vector bundle structure.
\[
\xymatrix{
U \ar[d]^\psi \ar@{^{(}->}[r] & \Fl(\mathbf{d}<\mathbf{e},V) \ar@{-->}[dl]\\
\Fl(\mathbf{d},V_1)\times\Fl(\mathbf{e}-n_1,V_2)
}
\]
Generally (including all three cases) we can say that, there is a Galois equivariant commutative diagram of $L$-varieties
\[
\xymatrix{
W \ar[d]^\pi \ar@{^{(}->}[r]^i & \Fl(\mathbf{d_0},V) \ar@{-->}[dl]\\
Y
}
\]
where $W$ is an open subvariety of $\Fl(\mathbf{d_0},V)$ and $(W,Y,\pi)$ is a Galois equivariant vector bundle (Lemma \ref{ConstrG-vb}).\\
Finally, we can use results of Galois descent (Theorem \ref{equivalence} and Theorem \ref{G-vb}) to achieve a commutative diagram over $K$ with the same properties. 
\[
\xymatrix{
W^* \ar[d]^{\pi^*} \ar@{^{(}->}[r] & X \ar@{-->}[dl]\\
Y^*
}
\]
$W^*$ is an open subvariety of $X$, therefore they are birational. Since $W^*$ is a vector bundle over $Y^*$,  $W^*$ is birational to $\mathbb{P}^m\times Y^*$ for some $m>0$. 
Putting these together shows that $X$ is birational to $\mathbb{P}^m\times Y^*$.
\end{proof}

\begin{rem}
\label{NANGE}
Now we can reflect on the role of the admissibility condition.
In the proof above we showed that we can endow $\Fl(\mathbf{d},V_1)$,  $\Fl(\mathbf{e}-n_1,V_2)$ and $\Fl(\mathbf{d},V_1)\times \Fl(\mathbf{e}-n_1,V_2)$ with twisted Galois actions
which makes the morphism $\phi_1$, $\phi_2$ and $\psi$ Galois equivariant.
If $\Fl(\mathbf{d},V)$ is non-admissible then $\Fl(\mathbf{d},V_1)$ and  $\Fl(\mathbf{e}-n_1,V_2)$ must be admissible. 
In this case there exists a pair $(T,\sigma)$ where $T:\Gal(L|K)\to\Aut_K(\Fl(\mathbf{d},V))$ is a twisted Galois actions and $\sigma\in\Gal(L|K)$  is such that 
$T(\sigma)=a_\sigma\circ B_b(\sigma)$ is `dimension-swapping', i.e $a_\sigma\notin\PGL(V)$ (see Remark \ref{NaturalAction2}). 
Because of this dimension-swap we cannot construct Galois actions on the target spaces which makes the morphisms $\phi_1$, $\phi_2$ and $\psi$ Galois equivariant.   
\end{rem}

\section{Brauer-Severi surfaces}
\label{BS2}
We  analyze the question of non-ruledness of non-trivial Brauer-Severi surfaces. We need to introduce a couple of new concepts. We will not explain them in full detail, the interested reader is referred to 
\cite{GS06}, \cite{Ja} and \cite{Ko16} for further information on the subject.
During this section we can relax the condition that the ground field is of characteristic zero. 

\begin{defn}
Let $K$ be a field, and let $X$ and $Y$ be Brauer-Severi varieties over $K$ with dimensions $n$ and $m$ respectively. Let $\varphi: X\dashrightarrow Y$ be a rational map. $\varphi$ is called twisted linear if  it is linear over $\overline{K}$, 
i.e. the composite map $\mathbb{P}^n \cong X\times\Spec \overline{K}\dashrightarrow Y\times\Spec \overline{K}\cong \mathbb{P}^m$ is linear.\\
We call the pair $(X,\varphi)$ a twisted linear subvariety of $Y$ if the composite map is induced by a linear injection. (Notice that in the case of a twisted linear subvariety $\varphi$ can be extended to a morphism of varieties.) 
If no confusion can arise we denote the pair $(X,\varphi)$ simply by $X$.\\
We call $X$ a minimal twisted linear subvariety of $Y$, if it is a twisted linear subvariety and has minimal dimension amongst the twisted linear subvarieties. 
By Theorem 28 in \cite{Ko16} the isomorphism class of a minimal twisted linear subvariety is well defined.\\
We call the Brauer-Severi variety $Y$ minimal if the only twisted linear subvariety of $Y$ is itself (up to isomorphism).\\
For an arbitrary Brauer-Severi variety $P$ we will denote a fixed minimal twisted linear subvariety by $P^{\m}$. 
\end{defn}

\begin{lem}
\label{LDNT}
Let $K$ be a field  and let $X$ be a Brauer-Severi curve or a Brauer-Severi surface over $K$. $X$ is non-trivial if and only if $X$ is minimal.
\end{lem}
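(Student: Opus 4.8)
The plan is to prove both implications of the equivalence "non-trivial $\iff$ minimal" for a Brauer-Severi curve or surface $X$ over $K$.

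\textbf{The easy direction.} First I would show that if $X$ is trivial, then $X$ is not minimal. If $X$ is trivial, then $X\cong\mathbb{P}^n_K$ with $n\in\{1,2\}$, and a $K$-rational linear embedding $\mathbb{P}^0_K\hookrightarrow\mathbb{P}^n_K$ (a $K$-rational point) is a twisted linear subvariety of dimension $0<n$. Hence the only twisted linear subvariety of $X$ is \emph{not} $X$ itself, so $X$ is not minimal. Equivalently, minimal $\Rightarrow$ non-trivial.

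\textbf{The substantive direction.} Conversely I would show: if $X$ is non-trivial (of dimension $1$ or $2$), then $X$ is minimal, i.e. $X$ has no proper twisted linear subvariety. Suppose for contradiction that $(Z,\varphi)$ is a twisted linear subvariety of $X$ with $\dim Z<\dim X$. A twisted linear subvariety $Z$ of dimension $m$ is itself a Brauer-Severi variety of dimension $m$, and $m\in\{0,1\}$ in the surface case and $m=0$ in the curve case. The key point is the standard fact (via the Brauer class / index, see \cite{GS06}, \cite{Ja}, \cite{Ko16}) that the existence of a twisted linear subvariety of dimension $m$ forces the index of the corresponding central simple algebra to divide $m+1$; in particular a $0$-dimensional twisted linear subvariety is a $K$-rational point, which forces $X$ to be trivial, and a $1$-dimensional twisted linear subvariety of a Brauer-Severi surface forces the index to divide $2$, but the index of a Brauer-Severi surface divides $3$, so the index is $1$ and again $X$ is trivial. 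Either way $X$ is trivial, contradicting the hypothesis. I would phrase this using the classification of minimal twisted linear subvarieties (Theorem 28 in \cite{Ko16}): $X^{\m}$ has dimension $\ind(X)-1$, so $X$ is minimal precisely when $\ind(X)=\dim X+1$; since $\ind(X)$ divides $\dim X + 1 \in \{2,3\}$ and $\ind(X)>1$ exactly when $X$ is non-trivial, non-triviality forces $\ind(X)=\dim X+1$, hence minimality.

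\textbf{Main obstacle.} The main thing to get right is the input about indices of Brauer-Severi curves and surfaces and the relation between minimality and the index — namely that for $\dim X\in\{1,2\}$ the divisors of $\dim X+1$ are only $1$ and $\dim X+1$ itself, which is exactly what makes "non-trivial" (index $>1$) coincide with "index $=\dim X+1$" (minimal). This is a clean number-theoretic observation once Theorem 28 of \cite{Ko16} is invoked; the only care needed is to correctly cite that the dimension of a minimal twisted linear subvariety equals the index of the Brauer-Severi variety minus one, and that the index is $1$ iff the variety is trivial. I expect no serious technical difficulty beyond assembling these cited facts, since the low dimension collapses all intermediate cases.
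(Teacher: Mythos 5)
Your proof is correct and follows essentially the same route as the paper: both directions reduce to Ch\^atelet's theorem, with the non-trivial $\Rightarrow$ minimal implication handled by showing that a proper twisted linear subvariety (a rational point, or a twisted-linear curve in the surface case) forces the index to be $1$. The paper is terser, citing ``versions of Ch\^atelet's theorem'' for the surface case where you spell out the divisibility argument $\ind(X)\mid 2$ and $\ind(X)\mid 3$, but the substance is identical.
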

\begin{proof}
If $X$ is a non-minimal Brauer-Severi curve then $X$ has a $0$-dimensional twisted linear subvariety, i.e $X$ has a $K$-rational point. Then by Ch\^atelet's theorem $X$ is trivial (Theorem 5.1.3 in \cite{GS06}).\\
If $X$ is a non-minimal Brauer-Severi surface then $X$ either has a $K$-rational point or a one codimensional  twisted linear subvariety. In both cases $X$ is trivial by versions of Ch\^atelet's theorem.\\
The other directions are trivial.
\end{proof}

\begin{defn}[Definition-Lemma 31 in \cite{Ko16}]
We call two Brauer-Severi varieties $X$ and $Y$ similar or Brauer equivalent if  $X^{\m}\cong Y^{\m}$.
\end{defn}

\begin{rem}
There is a canonical correspondence between central simple algebras and Brauer-Severi varieties over a given field $K$ (Theorem 5.1 in\cite{Ja}). We can also introduce the Brauer equivalence relation on the central simple algebras in a natural way.
The canonical correspondence between central simple algebras and Brauer-Severi varieties respects these equivalence relations.\\
Furthermore we can endow the central simple algebras with operations (tensor product and taking the opposite algebra), which respects the Brauer equivalence relation and turn the equivalence classes into a commutative group, 
called the Brauer group (Chapter 2.4 in \cite{GS06}).\\
A similar construction can be carried out purely geometrically.
\end{rem}

\begin{thm}
\label{GBG}
We can introduce operations on Brauer-Severi varieties which turns the Brauer equivalence classes into a commutative group which is naturally isomorphic to the Brauer group. 
(For further details see Section 4 and 5 of \cite{Ko16}.)
\end{thm}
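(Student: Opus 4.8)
The plan is to transport the group structure from the Brauer group of $K$ to the set of Brauer equivalence classes of Brauer--Severi varieties via the Severi--Brauer dictionary, and then check that the transported operation is intrinsic. First I would fix once and for all the canonical bijection $A\mapsto\mathrm{SB}(A)$ between isomorphism classes of central simple $K$-algebras and isomorphism classes of Brauer--Severi $K$-varieties (Theorem 5.1 in \cite{Ja}), under which $M_{n+1}(K)$ corresponds to $\mathbb{P}^n_K$ and, more generally, $A$ is split if and only if $\mathrm{SB}(A)$ has a $K$-rational point. I would then invoke the compatibility recorded in the remark preceding the statement: this bijection carries Brauer equivalence of algebras to Brauer equivalence of varieties, and on the level of minimal representatives the underlying division algebra $D$ of $A$ corresponds to the minimal twisted linear subvariety $\mathrm{SB}(A)^{\m}$ (which is well defined by Theorem 28 in \cite{Ko16}).

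Next I would define the operations. For $X=\mathrm{SB}(A)$ and $Y=\mathrm{SB}(B)$ set $X\cdot Y:=\mathrm{SB}(A\otimes_K B)$ and $X^{-1}:=\mathrm{SB}(A^{\mathrm{op}})$; geometrically $X^{-1}$ is the dual Brauer--Severi variety parametrising hyperplanes in $X$, and $X\cdot Y$ admits an intrinsic description built directly from $X$ and $Y$, which is the content of Sections 4 and 5 of \cite{Ko16} (for the proof it suffices to know that such a variety exists and is canonically attached to the pair). Passing to Brauer classes, I would check that $[X]\cdot[Y]:=[X\cdot Y]$ is well defined: if $X'\sim X$ and $Y'\sim Y$, then the corresponding algebras satisfy $A'\sim A$ and $B'\sim B$, hence $A'\otimes_K B'$ and $A\otimes_K B$ have isomorphic underlying division algebras, hence $\mathrm{SB}(A'\otimes_K B')\sim\mathrm{SB}(A\otimes_K B)$ by the compatibility above. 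Commutativity and associativity, the identity law with unit $[\mathbb{P}^n_K]$ (since $M_r(K)\otimes_K M_s(K)\cong M_{rs}(K)$, so any two trivial Brauer--Severi varieties lie in one Brauer class), and the inverse law (since $A\otimes_K A^{\mathrm{op}}\cong M_{\dim_K A}(K)$) are then immediate transcriptions of the corresponding identities in $\Br(K)$.

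Finally, I would observe that by construction the map $[\mathrm{SB}(A)]\mapsto[A]$ is a bijection from the set of Brauer classes of Brauer--Severi $K$-varieties onto $\Br(K)$ intertwining $\cdot$ with $\otimes$, hence a group isomorphism; it is natural in $K$ because both the Severi--Brauer correspondence and the formation of $\mathrm{SB}(A\otimes_K B)$ commute with base change of fields, so it identifies the functor $K\mapsto\{\text{Brauer classes of Brauer--Severi }K\text{-varieties}\}$ with the Brauer group functor.

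The step I expect to be the main obstacle is the well-definedness of $\cdot$ on Brauer classes together with the identification $\mathrm{SB}(A)^{\m}\cong\mathrm{SB}(D)$: one must pin down the normalisation of the Severi--Brauer dictionary (there is an opposite-algebra ambiguity, and different sources realise $\mathrm{SB}(A)$ as a variety of one-sided ideals of a fixed reduced dimension) so that $\mathrm{SB}(A\otimes_K B)$, rather than $\mathrm{SB}(A^{\mathrm{op}}\otimes_K B)$, is the intrinsic geometric product, and one must verify that minimal twisted linear subvarieties genuinely match underlying division algebras. Once the conventions of \cite{Ja} and \cite{Ko16} are fixed consistently, everything else is a formal consequence and no geometry beyond the cited results is required.
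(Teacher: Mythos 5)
The paper does not actually prove this statement: it is quoted wholesale from \cite{Ko16} (Sections 4 and 5), so there is no internal argument to compare yours against. Your transport-of-structure argument is correct as a reconstruction: fixing the dictionary $A\mapsto\mathrm{SB}(A)$ of \cite{Ja}, defining the product and inverse through $A\otimes_K B$ and $A^{\mathrm{op}}$, checking well-definedness on Brauer classes via the matching of minimal twisted linear subvarieties with underlying division algebras, and then reading off commutativity, associativity, unit and inverses from $\Br(K)$ is exactly the standard way to see that the equivalence classes form a group isomorphic to $\Br(K)$. You also correctly isolate the real subtlety, namely the $A$ versus $A^{\mathrm{op}}$ normalisation of the dictionary and the identification $\mathrm{SB}(A)^{\m}\cong\mathrm{SB}(D)$. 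The one point worth flagging is that the literal content of the theorem --- and the point of Koll\'ar's treatment --- is that the operations can be \emph{introduced on the varieties themselves}, intrinsically and without passing through central simple algebras; your proof outsources precisely that construction to the same citation the paper uses. Since the paper itself defers everything to \cite{Ko16}, this is an acceptable division of labour, but one should be aware that what your argument establishes unaided is only the classical isomorphism between $\Br(K)$ and the set of Brauer classes, not the geometric definition of $X\otimes Y$ that the later results (Amitsur's theorem, the index computations) are stated in terms of.
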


\begin{rem}
If $X$ and $Y$ are Brauer-Severi varieties we will use the notation $X\otimes Y$ for the binary operation introduced in Theorem \ref{GBG}. 
We will use the notation $X^{\otimes m}$ to denote the $m$-fold `product' of $X$ with itself ($m\in\mathbb{Z}^+$).
\end{rem}

\begin{thm}[Amitsur's theorem, Proposition 45 in \cite{Ko16}]
\label{Amitsur}
Let $X$ and $Q$ be Brauer-Severi varieties. The following two conditions are equivalent: $Q$ is similar to $X^{\otimes m}$ for some positive integer $m$;
there is a rational map $\varphi: X\dashrightarrow Q$.
\end{thm}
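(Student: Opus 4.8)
The plan is to translate both conditions into the single statement that the Brauer class of $Q$ dies over the function field $K(X)$, and then to reduce the theorem to Amitsur's computation of the relative Brauer group.

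First I would set up a dictionary via Theorem \ref{GBG}: write $[Z]$ for the element of $\Br(K)$ attached to a Brauer--Severi variety $Z$, so that $\otimes$ corresponds to addition and $[Z^{\otimes m}]=m[Z]$. Since $\Br(K)$ is torsion the subgroup $\langle[X]\rangle$ is finite, so every element of it is a \emph{positive} multiple of $[X]$, and hence ``$Q\sim X^{\otimes m}$ for some positive integer $m$'' is equivalent to ``$[Q]\in\langle[X]\rangle$''. On the other hand, a rational map $X\dashrightarrow Q$ is the same datum as a $K$-morphism $\Spec K(X)\to Q$ (a rational map restricts to a morphism on a dense open, over which the generic point factors; conversely such a morphism spreads out, since $Q$ is of finite type), equivalently a $K(X)$-point of $Q\times_K\Spec K(X)$; and a Brauer--Severi variety has a rational point over an extension field exactly when it becomes trivial there, i.e. when its class dies in the Brauer group of that field (the general form of Ch\^atelet's theorem, whose curve and surface cases appear as Lemma \ref{LDNT}). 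Therefore a rational map $X\dashrightarrow Q$ exists if and only if $[Q]$ lies in $\Ker\bigl(\Br(K)\to\Br(K(X))\bigr)$. After these reductions the theorem is exactly the equality
\[
\Ker\bigl(\Br(K)\to\Br(K(X))\bigr)=\langle[X]\rangle .
\]

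The inclusion $\langle[X]\rangle\subseteq\Ker$ is immediate and disposes of the implication (i)$\Rightarrow$(ii): the generic point of $X$ is a $K$-morphism $\Spec K(X)\to X$, i.e. a $K(X)$-point of $X\times_K\Spec K(X)$, so that variety is a Brauer--Severi variety with a rational point, hence trivial, hence $[X]$ and all its multiples restrict to $0$ in $\Br(K(X))$.

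The implication (ii)$\Rightarrow$(i) needs the reverse inclusion $\Ker\subseteq\langle[X]\rangle$, which is Amitsur's theorem proper and which I expect to be the main obstacle: writing $X=\mathrm{SB}(A)$ for a central simple $K$-algebra $A$ of degree $n$, one must show that the relative Brauer group $\Br(K(X)/K)$ is cyclic, generated by $[A]=[X]$. I would follow the classical route. The field $F=K(X)$ is a generic splitting field of $A$: an extension $E/K$ splits $A$ if and only if $X$ has an $E$-point, equivalently $F$ admits a $K$-place into $E$. Given $[B]\in\Br(F/K)$, one combines this property with the corestriction maps $\mathrm{cor}_{E/K}\colon\Br(E)\to\Br(K)$ and the relation $\mathrm{cor}_{E/K}\circ\mathrm{res}_{E/K}=[E:K]\cdot\mathrm{id}$: choosing a closed point of $X$ whose residue field $E$ has degree $n$ over $K$ (for instance an iterated generic hyperplane section; such an $E$ splits $A$), one separates $[B]$ into a multiple of $[A]$ plus a class that must vanish, and index/degree bookkeeping pins $[B]$ inside $\langle[A]\rangle$. (Equivalently one can run the purely geometric argument of \cite{Ko16}, Proposition 45, which analyses the twisted-linear reduction of a rational map of Brauer--Severi varieties, in the spirit of Theorem 28 and Definition--Lemma 31 quoted above.) Granting this, for $[Q]\in\Ker$ we obtain $[Q]=m[X]$ with $m\in\mathbb{Z}$, and by torsion of $\Br(K)$ we may take $m\ge 1$, so $Q\sim X^{\otimes m}$, which finishes the proof.
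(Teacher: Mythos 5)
The paper does not prove this statement: it is imported verbatim as Proposition 45 of \cite{Ko16}, so there is no in-house argument to compare yours against. Judged on its own terms, your setup is correct and clean: the dictionary $Q\sim X^{\otimes m}\ (m\ge 1)\iff [Q]\in\langle[X]\rangle$ (using torsion of $\Br(K)$), the identification of rational maps $X\dashrightarrow Q$ with $K(X)$-points of $Q$ and hence, via the general Ch\^atelet theorem, with the vanishing of $[Q]$ in $\Br(K(X))$, and the complete proof of the easy inclusion $\langle[X]\rangle\subseteq\Br(K(X)/K)$ via the generic point. This correctly reduces the theorem to the classical form $\Br(K(X)/K)=\langle[X]\rangle$.

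The gap is in the reverse inclusion, which is the entire content of Amitsur's theorem, and the mechanism you sketch does not close it. The place/corestriction bookkeeping you describe --- pick a degree-$n$ splitting point with residue field $E$, use that every splitting field of $A$ splits $B$, and apply $\mathrm{cor}_{E/K}\circ\mathrm{res}_{E/K}=[E:K]\cdot\mathrm{id}$ --- only yields $n[B]=0$, i.e.\ a bound on the exponent of $[B]$; it gives no way to ``separate $[B]$ into a multiple of $[A]$ plus a class that must vanish,'' and indeed no splitting-field argument alone can place $[B]$ inside the cyclic group $\langle[A]\rangle$ (the difficulty of doing so is precisely what makes Amitsur's conjecture on common splitting fields hard). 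To finish one genuinely needs a different input: either the Hochschild--Serre computation $0\to\Pic(X)\to\Pic(X_{\overline{K}})^{\Gamma}\to\Br(K)\to\Br(K(X))$, in which the image of the generator of $\Pic(X_{\overline{K}})\cong\mathbb{Z}$ is $[A]$ and exactness identifies $\Br(K(X)/K)$ with $\langle[A]\rangle$ (this is Theorem 5.4.1 in \cite{GS06}), or Koll\'ar's geometric induction on twisted-linear subvarieties in \cite{Ko16}. Your parenthetical deferral to the latter is an appeal to the very reference the paper cites for the whole theorem, so as written the hard implication remains unproved rather than proved by a different route.
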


\begin{defn}
Let $K$ be a field and $X$ be a projective $K$-variety. The index of $X$ is the greatest common divisor of the degrees of all $0$-cycles on $X$. It is denoted by $\ind (X)$.
\end{defn}

\begin{lem}[Lemma 51 in \cite{Ko16}]
\label{id}
Let $X$ be a Brauer-Severi variety and $m$ be a positive integer, then the index of $X^{\otimes m}$ divides the index of $X$.
\end{lem}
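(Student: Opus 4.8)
The plan is to leverage Amitsur's theorem (Theorem \ref{Amitsur}) together with the interpretation of the index as the degree of a minimal twisted linear subvariety. First I would recall the key fact (implicit in \cite{Ko16}) that for a Brauer-Severi variety $X$ the index $\ind(X)$ equals $\dim(X^{\m}) + 1$, i.e. it is read off from the minimal twisted linear subvariety $X^{\m}$. Indeed, $X^{\m}$ is Brauer equivalent to $X$, it is minimal, and a minimal Brauer-Severi variety has index equal to its dimension plus one (a minimal Brauer-Severi variety has no $0$-cycle of degree coprime to $\dim X^{\m}+1$, while hyperplane sections and the structure of the associated division algebra produce a $0$-cycle of degree exactly $\dim X^{\m}+1$). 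So the statement to prove becomes: $\dim\big((X^{\otimes m})^{\m}\big) + 1$ divides $\dim(X^{\m}) + 1$.

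Next I would produce a rational map $X^{\m} \dashrightarrow X^{\otimes m}$. Since $X^{\m}$ is Brauer equivalent to $X$, we have $X^{\m} \sim X$, hence $(X^{\m})^{\otimes m} \sim X^{\otimes m}$; and there is always a rational map from a Brauer-Severi variety to any variety Brauer equivalent to one of its tensor powers — in particular Amitsur's theorem (Theorem \ref{Amitsur}), applied with the Brauer-Severi variety $X^{\m}$ in the role of ``$X$'' and $X^{\otimes m}$ in the role of ``$Q$'', gives a rational map $\varphi: X^{\m} \dashrightarrow X^{\otimes m}$ precisely because $X^{\otimes m}$ is similar to $(X^{\m})^{\otimes m}$. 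Composing with a twisted linear embedding, one gets that $(X^{\otimes m})^{\m}$ is a twisted linear subvariety reached by a rational map from $X^{\m}$.

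From the existence of the rational map $\varphi: X^{\m}\dashrightarrow X^{\otimes m}$ I would invoke the ``linear algebra'' behind twisted linear maps: over $\overline{K}$ such a $\varphi$ becomes a rational (hence, after clearing, a projective-linear) map $\mathbb{P}^{N}\dashrightarrow \mathbb{P}^{M}$ where $N = \dim X^{\m}$, and a dominant-to-its-image such map forces $(X^{\otimes m})^{\m}$, the minimal twisted linear subvariety of the target, to pull back to a twisted linear subvariety of $X^{\m}$; but $X^{\m}$ is minimal, so the only twisted linear subvariety of $X^{\m}$ is itself, forcing compatibility of dimensions. The cleanest way to extract divisibility is to pass to indices directly: a rational map $X^{\m}\dashrightarrow (X^{\otimes m})^{\m}$ shows (by pulling back $0$-cycles, or by the central-simple-algebra dictionary where a rational map corresponds to the target algebra being a ``subalgebra-type'' quotient) that $\ind\big((X^{\otimes m})^{\m}\big) \mid \ind(X^{\m})$, and since both of these minimal varieties have index equal to dimension plus one, and $\ind(X^{\otimes m}) = \ind\big((X^{\otimes m})^{\m}\big)$, $\ind(X) = \ind(X^{\m})$, the claim follows.

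The main obstacle I anticipate is justifying rigorously the step ``a rational map of Brauer-Severi varieties forces the index of the target to divide the index of the source.'' This is really the statement that index is monotone along rational maps (equivalently, along the partial order given by Amitsur's theorem), and its proof either goes through the division-algebra picture (if $\mathbf{B}(Y)$ admits a rational map from $X$ then the underlying division algebra of $Y$ embeds appropriately in a matrix algebra over that of $X$, whence a divisibility of degrees) or through a $0$-cycle pushforward/pullback argument. I would spell out whichever of these the paper's conventions make shortest, citing \cite{Ko16} and \cite{GS06} for the identification of the index with $\dim X^{\m}+1$ and for Châtelet-type statements; everything else is then formal.
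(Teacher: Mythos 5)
The paper does not prove this statement at all: it is imported verbatim as Lemma 51 of \cite{Ko16}, so there is no internal argument to compare yours against, and your attempt has to stand on its own. Its overall shape is right --- produce a rational map from (something Brauer-equivalent to) $X$ to $X^{\otimes m}$ via Amitsur, then argue that the index can only drop along a rational map of Brauer--Severi varieties --- but the step you yourself flag as ``the main obstacle'' is precisely the step carrying all the content, and neither of the two routes you gesture at is carried out. The $0$-cycle route needs genuine care: a rational map $\varphi\colon P\dashrightarrow Q$ is only defined on a dense open $U\subseteq P$, so before pushing forward a closed point of minimal degree you must know that $U$ contains one. For Brauer--Severi varieties this is true (take a splitting field $L$ with $[L:K]=\ind(P)$; then $U_L$ is a dense open of $\mathbb{P}^n_L$ and has an $L$-point, whose image in $P$ is a closed point of degree $\ind(P)$ lying in $U$), and then $\varphi$ maps it to a closed point of $Q$ whose residue field embeds into $L$, whence $\ind(Q)\mid\ind(P)$. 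Without some such paragraph the proof is incomplete.

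Two smaller points. The claim that $(X^{\otimes m})^{\m}$ ``pulls back to a twisted linear subvariety of $X^{\m}$'' is false as stated --- the preimage of a linear subvariety under a rational map given by forms of degree $>1$ is not linear --- though it is not load-bearing, since you abandon that line in the next sentence. And the detour through $X^{\m}$ is unnecessary: Theorem \ref{Amitsur} applied with $Q=X^{\otimes m}$ already yields a rational map $X\dashrightarrow X^{\otimes m}$. In fact one can bypass Amitsur entirely: a closed point of $X$ with residue field $L$ splits $X$ over $L$, hence kills the Brauer class $m[X]$ of $X^{\otimes m}$, so $X^{\otimes m}$ acquires an $L$-point and therefore a closed point of degree dividing $[L:K]$; taking gcd's over all closed points of $X$ gives $\ind(X^{\otimes m})\mid\ind(X)$ in one stroke.
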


\begin{thm}[Theorem 53 in \cite{Ko16}]
\label{indiv}
Let $X$ be a Brauer-Severi variety. Then $\ind(X)=\ind(X^{\m})=\dim X^{\m}+1.$
\end{thm}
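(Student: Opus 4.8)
The plan is to split the statement into the two equalities $\ind(X)=\ind(X^{\m})$ and $\ind(X^{\m})=\dim X^{\m}+1$, prove each by establishing divisibilities in both directions, and then combine.

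\textbf{The equality $\ind(X)=\ind(X^{\m})$.} A twisted linear subvariety of $X$ extends to a morphism which over $\overline K$ is a linear embedding, hence a closed immersion over $\overline K$ and therefore already over $K$; so $X^{\m}$ is a closed subvariety of $X$. Every closed point of $X^{\m}$ is then a closed point of $X$ with the same residue field, so the set of degrees of closed points of $X^{\m}$ is contained in that of $X$, and taking greatest common divisors gives $\ind(X)\mid\ind(X^{\m})$. For the reverse divisibility, first observe that $X^{\m}$ is itself minimal: any twisted linear subvariety of $X^{\m}$ is a twisted linear subvariety of $X$, hence of dimension at least $\dim X^{\m}$, hence of dimension exactly $\dim X^{\m}$, hence isomorphic to $X^{\m}$ by the well-definedness of the minimal twisted linear subvariety. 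Consequently $X\sim X^{\m}$, so $X^{\m}$ is similar to $X$, hence to $X^{\otimes 1}$, and Amitsur's theorem (Theorem \ref{Amitsur}) furnishes a rational map $\varphi\colon X\dashrightarrow X^{\m}$. Let $U\subseteq X$ be its domain of definition; since $X$ is smooth and projective, every $0$-cycle on $X$ can be moved off the complement of $U$, so $\ind(U)=\ind(X)$. For every closed point $P\in U$ the residue field of $\varphi(P)$ embeds into that of $P$, whence $\deg\varphi(P)\mid\deg P$; ranging over all $P\in U$ gives $\ind(X^{\m})\mid\ind(U)=\ind(X)$.

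\textbf{The equality $\ind(X^{\m})=\dim X^{\m}+1$.} Write $Y=X^{\m}$, $n=\dim Y$ and $e=\ind(Y)$. That $e\mid n+1$ is the standard fact that the index of a Brauer--Severi variety divides one more than its dimension (equivalently, $\ind$ divides the degree of the associated central simple algebra; see \cite{GS06}, \cite{Ko16}); in particular $Y$ carries a closed point of degree exactly $e$, obtained from a separable splitting field of $Y$ of degree $e$. For the reverse inequality, take such a point $P$ with separable residue field: its $\overline K$-points form a Galois-stable set of $e$ points of $Y_{\overline K}\cong\mathbb{P}^n_{\overline K}$, whose linear span $\Lambda$ has dimension at most $e-1$ and is Galois-stable. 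Thus $\Lambda$ descends to a twisted linear subvariety $\Lambda_0\subseteq Y$ of dimension at most $e-1$. Since $Y=X^{\m}$ is minimal, every twisted linear subvariety of $Y$ has dimension $n$, so $n\leqq e-1$, i.e. $e\geqq n+1$. Combined with $e\mid n+1$ this forces $e=n+1$, and together with the first part we conclude $\ind(X)=\ind(X^{\m})=\dim X^{\m}+1$.

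\textbf{Main obstacle.} The one genuinely delicate point is the assertion that the index of a Brauer--Severi variety is realized by a \emph{single} closed point, not merely by an integral combination of closed points; this rests on the existence of a separable splitting field whose degree equals the index (for instance a strictly maximal subfield of the underlying division algebra), and is best quoted from the literature. Some care is also needed because the present section permits arbitrary characteristic: the point $P$ must be chosen with separable residue field so that the Galois group genuinely permutes its geometric points and the span $\Lambda$ really descends. Beyond these points, the argument is routine bookkeeping with degrees of $0$-cycles, together with the already-established Amitsur correspondence (Theorem \ref{Amitsur}) and the group structure on Brauer equivalence classes (Theorem \ref{GBG}).
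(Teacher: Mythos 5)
The paper offers no proof of this statement: it is imported wholesale as Theorem 53 of Koll\'ar's \emph{Severi--Brauer varieties; a geometric treatment} and used as a black box, so there is no internal argument to compare yours against. Your proof is essentially correct, and its central geometric move --- take a closed point with separable residue field, form the linear span of its $\overline{K}$-points, note that the span is stable under the twisted Galois action (which acts through $\PGL$ composed with the semilinear standard action, hence preserves linear subspaces) and so descends to a twisted linear subvariety, then invoke minimality of $X^{\m}$ --- is exactly the mechanism behind Koll\'ar's treatment. The first half ($\ind(X)=\ind(X^{\m})$, via the closed immersion in one direction and Amitsur plus a rational map $X\dashrightarrow X^{\m}$ in the other) is also sound; just note that the step ``the index of a smooth projective variety equals the index of any dense open subvariety'' is itself a nontrivial standard fact (a moving lemma for $0$-cycles on regular schemes) and deserves an explicit citation rather than the phrase ``can be moved off''.

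The one point to tighten is the input you yourself flag as the main obstacle. The existence of a \emph{single} closed point of degree exactly $e=\ind(Y)$ with separable residue field is not a consequence of the divisibility $e\mid n+1$; it is equivalent to the identity $\ind(Y)=\ind(A)$ between the cycle-theoretic index and the index of the associated central simple algebra, and proving it requires both the existence of a separable maximal subfield of the underlying division algebra (giving a separable point of degree $\ind(A)$) and Ch\^atelet's theorem (showing every residue field of a closed point splits $A$, so $\ind(A)$ divides every such degree and hence divides $e$). That identity carries most of the arithmetic content of the theorem, so quote it as that precise statement. Granting it, your span argument correctly gives $e\geqq n+1$, and combined with $e\mid n+1$ the proof closes.
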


\begin{lem}
Let $K$ be a field and $X$ be a Brauer-Severi surface over $K$. If $X$ is ruled then either $X$ is trivial or there exists a rational map $\varphi:X\dashrightarrow Q$, where $Q$ is a non-trivial Brauer-Severi curve.
\end{lem}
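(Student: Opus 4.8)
The plan is to read a Brauer--Severi curve off the ruling and then dispose of the trivial case by Amitsur's theorem. Suppose $X$ is ruled, so by definition $X$ is birational to a product $Y\times\mathbb{P}^1$ for some $K$-variety $Y$. As $X$ is a surface, $\dim Y=1$, and since over $\overline{K}$ the variety $X_{\overline{K}}\cong\mathbb{P}^2_{\overline{K}}$ is rational while it is birational to $Y_{\overline{K}}\times\mathbb{P}^1_{\overline{K}}$, one sees that $Y$ is geometrically integral and that $\overline{K}(Y_{\overline{K}})$ is a transcendence degree one subfield of a purely transcendental extension of $\overline{K}$. By L\"uroth's theorem $\overline{K}(Y_{\overline{K}})$ is itself purely transcendental, so $Y_{\overline{K}}$ is a rational curve. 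Hence the smooth projective model $C$ of $Y$ satisfies $C_{\overline{K}}\cong\mathbb{P}^1_{\overline{K}}$, i.e. $C$ is a Brauer--Severi curve, and $X$ is still birational to $C\times\mathbb{P}^1$.

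Composing the birational map $X\dashrightarrow C\times\mathbb{P}^1$ with the first projection yields a rational map $\varphi\colon X\dashrightarrow C$. If $C$ is non-trivial we are done, taking $Q=C$. So assume $C$ is trivial, i.e. $C\cong\mathbb{P}^1_K$; then $X$ is birational to $\mathbb{P}^1\times\mathbb{P}^1$ and hence to $\mathbb{P}^2$, so the inverse birational map produces a rational map $\mathbb{P}^2\dashrightarrow X$. By Amitsur's theorem (Theorem \ref{Amitsur}), $X$ is similar to $(\mathbb{P}^2)^{\otimes m}$ for some positive integer $m$; since $\mathbb{P}^2$ is the neutral element of the group of Theorem \ref{GBG}, $(\mathbb{P}^2)^{\otimes m}$ is Brauer-trivial, so $X^{\m}$ is $0$-dimensional (the minimal twisted linear subvariety of a trivial Brauer--Severi variety is a point), and thus $\ind(X)=\dim X^{\m}+1=1$ by Theorem \ref{indiv}. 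Therefore $X$ has a $K$-rational point, so it is non-minimal, hence trivial by Lemma \ref{LDNT}. Either way the lemma holds.

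The argument has no genuine obstacle; the only points that need a word of care are the L\"uroth step that identifies the base of the ruling with a Brauer--Severi curve (so that the projection gives the desired rational map) and the reduction of the trivial-base case to $\ind(X)=1$, for which Amitsur's theorem together with Theorems \ref{GBG} and \ref{indiv} keeps everything within the machinery already set up in this section. One could equally well finish the trivial-base case more elementarily, by noting that a $K$-rational Brauer--Severi variety has a $K$-rational point (over infinite fields because a dense open of affine space does, and over finite fields because the Brauer group vanishes) and invoking Ch\^atelet's theorem.
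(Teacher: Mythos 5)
Your proof is correct and follows essentially the same route as the paper: extract the base curve of the ruling, show it is geometrically rational (you via L\"uroth on a transcendence-degree-one subfield of $\overline{K}(x,y)$, the paper via projecting a general rational curve of $\mathbb{P}^1_{\overline{K}}\times Q_{\overline{K}}$ onto the base), and obtain $\varphi$ as the projection. The only real divergence is your treatment of the trivial-base case through Amitsur's theorem and indices, where the paper simply observes that $X$ then acquires a $K$-point and is trivial by Ch\^atelet; your closing remark already notes this more elementary finish, including the finite-field caveat that the paper glosses over.
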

\begin{proof}
If $X$ is ruled then it is birational to $\mathbb{P}^1_K\times Q$, where $Q$ is a smooth projective curve. Notice that if $Q$ is birational to $\mathbb{P}^1_K$, then $X$ has $K$-rational points, therefore $X$ is trivial by Ch\^atelet's theorem.
So we can assume that $Q$ is not isomorphic to the projective line.\\
Denote $Q\times\Spec{\overline{K}}$ by $Q_{\overline{K}}$. Since $X$ is a Brauer-Severi surface, $\mathbb{P}^1_{\overline{K}}\times Q_{\overline{K}}$ is rational.\\
Therefore we can take a general rational curve $c:\mathbb{P}^1_{\overline{K}}\to\mathbb{P}^1_{\overline{K}}\times Q_{\overline{K}}$ 
(i.e. we can take a general morphism of the projective line to $\mathbb{P}^1_{\overline{K}}\times Q_{\overline{K}}$) and 
we can compose $c$ with the canonical projection $\mathbb{P}^1_{\overline{K}}\to\mathbb{P}^1_{\overline{K}}\times Q_{\overline{K}}\to Q_{\overline{K}}$. Since $c$ is general, the composite is dominant
(i.e. the rational curve does not lie in a fiber over $Q_{\overline{K}}$). 
Hence we get a non-trivial morphism from a projective line to the smooth curve $Q_{\overline{K}}$. This implies that $Q_{\overline{K}}$ is isomorphic to the projective line, i.e. $Q$ is a Brauer-Severi curve. 
As $Q$ is not isomorphic to the projective line, $Q$ is non-trivial.\\
The composite $X\dashrightarrow\mathbb{P}^1_K\times Q\to Q$, where the first map is the birational isomorphism giving the ruledness and the second is the canonical projection, gives a rational map $X\dashrightarrow Q$ 
from $X$ to a non-trivial Brauer-Severi curve.
\end{proof}

\begin{proof}[Proof of Theorem \ref{BS2T}]
If the Brauer-Severi surface $X$ is trivial, then it is ruled.\\
Assume that $X$ is non-trivial and ruled. By the previous lemma there is a rational map $\varphi: X\dashrightarrow Q$, where $Q$ is a non-trivial Brauer-Severi curve.\\
By Amitsur's theorem (Theorem \ref{Amitsur}) $Q$ is similar to $X^{\otimes m}$ for some positive integer $m$. Hence $Q^{\min}\cong (X^{\otimes m})^{\m}$ by the definition of similarity.\\
We can consider indices: 
$$2=\dim Q+1=\dim Q^{\m}+1=\ind(Q^{\m})=\ind((X^{\otimes m})^{\m})=\ind(X^{\otimes m}),$$ 
by Lemma \ref{LDNT} and by Theorem \ref{indiv}. 
On the other hand $\ind(X^{\otimes m})$ divides $\ind(X)$ by Lemma \ref{id}, and 
$$\ind(X)=\dim X^{\m}+1=\dim X+1=3,$$ 
by Theorem \ref{indiv} and Lemma \ref{LDNT}. Since $2$ does not divide $3$, we arrived to a contradiction.
Hence a non-trivial Brauer-Severi surface cannot be ruled. This finishes the proof.  
\end{proof}

\begin{rem}
Brauer-Severi surfaces corresponds canonically to degree three central simple algebras (Theorem 5.1 in\cite{Ja}). (The degree of a central simple algebra is the square root of its dimension, it is a positive integer.) 
By Wedderburn's theorem degree three central simple algebras are cyclic algebras (Chapter 15.6 in \cite{Pi82}). Moreover, if $K$ is a field of characteristic zero containing all roots of unity, 
then cyclic algebras over $K$ are given by the following presentation:
$$K<x_1,x_2| x_1^m=a, x_2^m=b, x_1x_2=\omega x_2x_1>,$$
where $m\in\mathbb{Z}^+$, $a,b\in K^*$ and $\omega$ is a primitive $m$-th root of unity (Corollary 2.5.5 in \cite{GS06}).\\
We call a central simple algebra over $K$ split  if it is isomorphic to a matrix ring over $K$. It is equivalent with the corresponding Brauer-Severi variety being trivial. 
A cyclic algebra of the above presentation (where $K$ is a field of characteristic zero containing all roots of unity) is split if and only if $b$ is a norm from the field extension $K(\sqrt[m]a)|K$ (Corollary 4.7.7 in \cite{GS06}).\\
Putting these together, one can show that 
$$\mathbb{C}(t_1,t_2)<x_1,x_2| x_1^3=t_1, x_2^3=t_2, x_1x_2=e^{2\pi i/3} x_2x_1>$$
corresponds to a non-trivial Brauer-Severi surface over a field of characteristic zero containing all roots of unity.   
\end{rem}

\end{document}